\tikzstyle{EDR}=[draw=lightgray,line width=0pt,preaction={clip, postaction={pattern=north east lines, pattern color=gray}}]
\tikzstyle{EDR1}=[draw=lightgray,line width=0pt,preaction={clip, postaction={pattern=north west lines, pattern color=gray}}]
\definecolor{mygray}{gray}{0.95}
\definecolor{mypink1}{rgb}{1.2,1.1,0.9}
\definecolor{mypink2}{rgb}{1.0,0.95 ,0.9}
\definecolor{mypink3}{rgb}{1.0,0.6,0.7}
\numberwithin{equation}{section}
\newtheorem{theorem}{Theorem}[section]
\newtheorem{definition}[theorem]{Definition}
\newtheorem{lemma}[theorem]{Lemma}
\newtheorem{proposition}[theorem]{Proposition}
\newtheorem{remark}[theorem]{Remark}
\numberwithin{equation}{section}
\newcommand{\R}{\mathbb R}
\newcommand{\dif}{\mathrm{d}}
\newcommand{\beq}{\begin{equation}}
	\newcommand{\eeq}{\end{equation}}
\newcommand{\beqq}{\begin{equation*}}
	\newcommand{\eeqq}{\end{equation*}}
\newcommand{\ben}{\begin{eqnarray}}
	\newcommand{\een}{\end{eqnarray}}
\newcommand{\beno}{\begin{eqnarray*}}
	\newcommand{\eeno}{\end{eqnarray*}}
\begin{document}
		\title[H\"ormander oscillatory integral operators]
	{H\"ormander oscillatory integral operators: a revisit}

	\author[C. Gao]{Chuanwei Gao}
\address{	School of Mathematical Sciences, Capital Normal University, Beijing 100048, China}
\email{cwgao@cnu.edu.cn}

	\author[Z. Gao]{Zhong Gao}
\address{Institute of Applied Physics and Computational Mathematics, P.O. Box 8009, Beijing 100088, P.R. China }
\email{gaozhong18@gscaep.ac.cn}

	\author[C. Miao]{Changxing Miao}

\address{Institute of Applied Physics and Computational Mathematics, P.O. Box 8009, Beijing 100088, P.R. China }

\email{miao\_{}changxing@iapcm.ac.cn}
	

\subjclass[2020]{Primary:42B10, Secondary: 42B20}

\keywords{H\"ormander oscillatory integral operators; bilinear method, decoupling inequality, induction argument, Broad-Narrow analysis}

	\begin{abstract}
		In this paper, we present new proofs for both the sharp $L^p$ estimate and the decoupling theorem for the H\"ormander  oscillatory integral operator. The sharp $L^p$ estimate was previously obtained by Stein\;\cite{stein1} and Bourgain-Guth \cite{BG} via the $TT^\ast$ and multilinear methods, respectively. We provide a  unified  proof  based on the bilinear method for both odd and even dimensions. The strategy is inspired by  Barron's work \cite{Bar} on the restriction problem. The decoupling theorem for the H\"ormander oscillatory integral operator can be obtained by the approach in \cite{BHS}, where the key observation can be roughly formulated as follows: in a physical space of sufficiently small scale, the variable setting can be essentially viewed as translation-invariant. In contrast, we reprove the decoupling theorem for the H\"ormander oscillatory integral operator through the Pramanik-Seeger approximation approach \cite{PS}. Both proofs rely  on a scale-dependent induction argument,  which can be used to deal with perturbation terms in the phase function.

	\end{abstract}
	
	\maketitle
	
	\section{Introduction}
	Let $n \geq 2$, $a \in C_c^{\infty}(\R^n \times \R^{n-1})$ be non-negative and supported in $B^n_1(0) \times B^{n-1}_1(0)$ and $\phi \colon B^n_1(0) \times B^{n-1}_1(0) \to \R$ be a smooth function. For any $\lambda\geq 1$, define 
	\begin{equation}\label{eq:00}
		T^{\lambda}f(x) :=  \int_{B^{n-1}_1(0)} e^{2 \pi i \phi^{\lambda}(x, \xi)}a^{\lambda}(x,\xi) f(\xi)\,d \xi,
	\end{equation}
	where  $f \colon B^{n-1}_1(0) \to \mathbb{C}$ and
	\begin{equation}\label{eq:00a}
		a^{\lambda}(x, \xi) := a(x/\lambda, \xi),\  \phi^{\lambda}(x,\xi) :=\lambda\phi(x/\lambda,\xi).
	\end{equation}
	We say that the operator $T^\lambda$ is a H\"ormander oscillaroty integral operator if $\phi$ satisfies the following Carleson-Sj\"olin conditions:
	\begin{itemize}
		\item[(H1)] $\mathrm{rank}\, \partial_{x'\xi}^2 \phi(x,\xi) = n-1$ for all $(x,\xi) \in B^n _1(0)\times B^{n-1}_1(0)$ and $x=(x',x_n)$;
		\item[(H2)] For each $x_0\in{\rm supp\,}_x a$, the hypersurface $$\{\partial_x\phi(x_0,\xi): \xi \in {\rm supp}\; a(x_0,\cdot)\}$$
		has non-vanishing Gaussian curvature.
		
	\end{itemize}
	A typical example for the H\"ormander oscillatory  integral operators is the following extension operator $E$ defined by
	\beq\label{eq:02a}
	Ef(x):=\int_{B_1^{n-1}(0)} e^{2\pi i(x'\cdot \xi+x_n\psi(\xi))}f(\xi)d\xi,
	\eeq
	with
	\beq\label{eq:02z}
	{\rm rank}\Big(\frac{\partial^2 \psi}{\partial \xi_i\partial \xi_j}\Big)_{(n-1)\times (n-1)}=n-1,
	\eeq
it is straightforward to verify that the phase function $\phi(x,\xi):=x'\cdot\xi+x_n\psi(\xi)$ satisfies the conditions (H1) and (H2).
	For the H\"ormander oscillatory integral operators, we revisit the following two important problems: the sharp $L^p$ estimate and the decoupling inequality.
	
	{\bf \noindent Sharp $L^p$ estimate.}
	H\"ormander \cite{Hor} conjectured that  if $\phi$ satisfies conditions (H1),(H2), then
	\beq\label{eq:01a}
	\|T^\lambda f\|_{L^p(\R^n)}\lesssim \|f\|_{L^p(B_1^{n-1}(0))}
	\eeq
	for $p>\frac{2n}{n-1}$, and he proved this conjecture for $n=2$. For the higher dimensional cases, Stein \cite{stein1} proved \eqref{eq:01a}  for $p\geq \frac{2(n+1)}{n-1}$. Later, Bourgain \cite{Bourgain1} disproved H\"ormander's conjecture, and showed that Stein's result is sharp in the odd dimensions. For the even dimensions,  up to an endpoint, Bourgain and Guth \cite{BG} established the sharp result. In summary, we  may state  the results as follows.
	\begin{theorem}[\cite{BG},\cite{stein1}]\label{theo1}
		Let $n\geq 3$ and  $T^\lambda$ be a H\"ormander oscillatory integral  operator as in $\eqref{eq:00}$. For all $\varepsilon>0,\lambda\geq 1$,
		\beq
		\|T^\lambda f\|_{L^p(\R^n)}\lesssim_{\varepsilon,\phi,a} \lambda^{\varepsilon}\|f\|_{L^p(B^{n-1}_1(0))}
		\eeq
		holds whenever
		\begin{equation}\label{eq:mainaa}p\geq \left\{\begin{aligned}
				&\tfrac{2(n+1)}{n-1}\quad \text{\rm for $n$ odd},\\
				&\tfrac{2(n+2)}{n}\quad \text{\rm for $n$ even}.
			\end{aligned}\right.\end{equation}
	\end{theorem}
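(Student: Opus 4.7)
The plan is to follow the bilinear method as outlined in the abstract, adapting Barron's strategy for the restriction problem to the variable-coefficient H\"ormander setting. The scheme has four ingredients: (i) a sharp bilinear estimate for $T^\lambda$ on transversal caps, (ii) a Bourgain--Guth broad-narrow decomposition at an intermediate scale $K$, (iii) $\ell^2$-decoupling applied to the narrow contribution, and (iv) a scale-dependent bootstrap. A pleasant feature is that the parity dependence in \eqref{eq:mainaa} enters only through the decoupling exponent, so a single argument treats both cases.

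First I would establish a sharp bilinear $L^{p/2}$ inequality for $T^\lambda f_{\tau_1}\,T^\lambda f_{\tau_2}$ over caps $\tau_1,\tau_2$ of radius $K^{-1}$ that are quantitatively transversal, in the sense that the image normals $\partial_{x'}\phi(\,\cdot\,,\xi_j)$ at the cap centres are separated as dictated by the curvature hypothesis (H2). Via a standard wave-packet decomposition one localises $x$ to balls of radius $\sim \lambda/K^{2}$, freezes $\phi$ at a base point, and reduces matters to a perturbed translation-invariant bilinear extension estimate of Lee--Vargas--Guth type; the non-vanishing Gaussian curvature supplies the transversality input.

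Second, I would perform broad-narrow analysis at scale $K$: on each ball in physical space either the value $|T^\lambda f(x)|$ is dominated by a bilinear contribution from a transversal pair of caps (the \emph{broad} case, to which step one applies after a C\'ordoba-type aggregation), or else the mass concentrates near a lower-dimensional set of frequencies (the \emph{narrow} case). The narrow part is handled by invoking the $\ell^2$-decoupling theorem for the image hypersurface of $\phi(x_0,\cdot\,)$---the very theorem the authors reprove later via the Pramanik--Seeger approximation---yielding a gain of $K^{-\sigma_n}$ whose exponent $\sigma_n$ depends on the Gaussian curvature hypothesis and differs for $n$ even and $n$ odd; this discrepancy is precisely what produces the two thresholds in \eqref{eq:mainaa}. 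Combining the two bounds gives a self-improving inequality of the form $\|T^\lambda f\|_p\leq C_\varepsilon K^{C\varepsilon}\lambda^{\delta(K)}\|f\|_p$ with $\delta(K)\to 0$ as $K\to\infty$, and a standard induction on $\lambda$ closes the bootstrap.

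The main technical obstacle, and the reason the induction must be \emph{scale-dependent}, is the non-translation-invariance of $\phi$. When one localises to a ball of radius $\lambda^{1-\delta}$, rescales by that factor, and applies the induction hypothesis to the resulting operator, the new phase is a $C^\infty$-perturbation of a model paraboloid-type phase by terms of amplitude $O(\lambda^{-\delta})$ whose higher derivatives are not a priori controlled by $(\phi,a)$ alone. The bootstrap must therefore be set up on a family of H\"ormander phases whose relevant seminorms are allowed to grow polynomially in $\lambda$, and one must verify that both the bilinear inequality and the decoupling estimate remain uniformly valid across this family. This bookkeeping---together with the verification that the transversality condition survives the rescaling that identifies the broad case with a genuine bilinear configuration---is the real substance of the argument, and is where Barron's framework improves on earlier induction-on-scales schemes.
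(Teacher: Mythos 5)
Your overall scheme (bilinear input, broad--narrow dichotomy at an intermediate scale $K$, parabolic rescaling, scale-dependent induction over a family of perturbed phases) matches the paper's strategy, and your last paragraph correctly identifies the heart of the matter: the induction is set up over a scale-indexed family of \emph{asymptotically flat} phases $\phi_K(x,\xi)=x'\cdot\xi+x_n\langle M\xi,\xi\rangle+{\rm E}_K(x,\xi)$ with ${\rm E}_K$ and its derivatives $O(K^{-2})$, precisely so that the bilinear estimate, the geometric dichotomy, and the rescaling lemma hold uniformly across scales. However, your treatment of the narrow case contains a concrete error.

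You claim the narrow contribution is controlled by ``invoking the $\ell^2$-decoupling theorem for the image hypersurface of $\phi(x_0,\cdot)$'' and that the parity dependence in \eqref{eq:mainaa} ``enters only through the decoupling exponent.'' Neither is what happens, and the first would actually run in the wrong direction. In the narrow case, by the geometric dichotomy (Proposition \ref{pro1}, a Barron-type lemma), every significant $K^{-1}$-cap lies in an $O(K_1^{-1})$-neighbourhood of some $m$-dimensional affine subspace $V$, where $m$ is the number of positive eigenvalues of the Hessian $M$. Because the caps are confined to an $m$-dimensional slab, there is no $(n-1)$-dimensional curvature to exploit, and the paper applies only the \emph{flat} $L^2$-orthogonality decoupling of Lemma \ref{flat-decoupling} (which needs no curvature), followed by H\"older over the $\sim K_1^m$ caps in $\Pi_V$, the parabolic rescaling Lemma \ref{rescaling}, and the inductive hypothesis on $Q_p$. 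The resulting exponent count is $K_1^{2m(1/2-1/p)-(n-1)+2n/p}$, which is $\leq K_1^{-\varepsilon}$ exactly when $p\geq \frac{2(n-m)}{n-m-1}$. The parity split comes from maximizing this threshold over the admissible signatures $0\leq m\leq\lfloor(n-1)/2\rfloor$ and comparing with the broad threshold $\frac{2(n+2)}{n}$: for $n$ odd the worst $m=(n-1)/2$ gives $\frac{2(n+1)}{n-1}$, while for $n$ even the worst $m=(n-2)/2$ gives $\frac{2(n+2)}{n}$, coinciding with the bilinear exponent. So the parity enters through the dimension $m$ of the exceptional affine subspace on which the strongly separated (transversality) condition can fail, not through a curvature-dependent decoupling gain. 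Substituting the full Bourgain--Demeter theorem for the flat one here would be unjustified, since the curvature hypothesis does not help inside the exceptional slab; and relying on it would obscure the mechanism that actually produces the even/odd threshold. Separately, the bilinear estimate used in the broad case must be the Lee / Lee--Seeger version valid for surfaces whose Hessian has mixed signature (Theorem \ref{theo3}), not an elliptic-only Tao--Vargas--Guth-type bound, since the genuinely new content of the argument concerns the hyperbolic signatures $m<(n-1)/2$.
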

	
	Stein's proof is based on the $TT^{\ast}$ method and gives the range $p\geq \frac{2(n+1)}{n-1}$ in all dimensions. However, this result is not sharp in even dimensions. Bourgain-Guth \cite{BG} resolved the even-dimensional cases up to the endpoints using the Broad-Narrow approach. Bourgain-Guth's method can also be applied to the odd-dimensional cases, see Guth-Hickman-Iliopoulou\cite{GHI} for details. We give another proof based on the bilinear approach. We take the extension operator as a model case to illustrate how one can derive  the linear estimate for the oscillatory integral operator from its bilinear counterpart. To this end, we first recall a sharp bilinear restriction theorem of Lee\cite{Lee}.
	
	\begin{theorem}[\cite{Lee}\label{theo2}]
		Suppose that $\xi\in B_1^{n-1}(0)$ and the \emph{Hessian matrix} of $\phi$ is nondegenerate, i.e.
		\beqq
		{\rm det}\mathcal H\phi(\xi)\neq 0.
		\eeqq
		Additionally, let $V_1,V_2$ be two sufficiently small balls contained in $B^{n-1}_1(0)$, and suppose that for all $\xi'\in V_1, \xi''\in V_2$ and $\xi_i \in V_i, i=1,2$,
		\beq\label{co1}
		\big|\langle (\mathcal{H}\phi)^{-1}(\xi_i)(\nabla\phi(\xi')-\nabla\phi(\xi'')), \nabla\phi(\xi')-\nabla\phi(\xi'')\rangle\big|
		\geq c>0,\eeq
		then
		\beq
		\||Ef_1Ef_2|^\frac12\|_{L^p(\R^{n})}\leq R^\varepsilon \|f_1\|_{L^2}^\frac12\|f_2\|_{L^2}^\frac12,
		\eeq
		for $p\geq \frac{2(n+2)}{n}$.
	\end{theorem}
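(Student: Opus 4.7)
The plan is to prove the theorem via an induction-on-scales argument combined with wave packet decomposition and bilinear $L^2$ orthogonality, in the spirit of Tao's bilinear restriction theorem for the paraboloid, adapted to general elliptic phases as by Lee. First I would reduce to the endpoint $p_0=\tfrac{2(n+2)}{n}$, since larger $p$ then follow by interpolation with the trivial Cauchy--Schwarz/$L^\infty$ bound. By an affine rescaling in $\xi$ and a linear change of coordinates in $x$, I would normalize so that the Hessian $\mathcal H\phi$ is bounded and elliptic on $V_1\cup V_2$, the caps $V_i$ are of unit size, and the transversality constant $c$ in \eqref{co1} is of order one. It then suffices to prove the localized version
\beqq
\||Ef_1 Ef_2|^{1/2}\|_{L^{p_0}(B_R)}\leq C_\varepsilon R^{\varepsilon}\|f_1\|_{L^2}^{1/2}\|f_2\|_{L^2}^{1/2},
\eeqq
where the best constant $A(R)$ is controlled by induction on $R$.

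Next I would perform a wave packet decomposition at scale $R^{-1/2}$: partition each $V_i$ into caps $\tau_i$ of side length $R^{-1/2}$, so that $Ef_{i,\tau_i}$ is essentially concentrated on tubes $T_{\tau_i,v_i}$ of dimensions $R^{1/2}\times\cdots\times R^{1/2}\times R$, oriented along the normal $(\nabla\psi(\xi_{\tau_i}),1)$. The combined hypotheses of non-degenerate Hessian and \eqref{co1} yield quantitative transversality: the normals to the surface $\{\nabla\phi(\xi):\xi\in\tau_i\}$ over $\tau_1\subset V_1$ and $\tau_2\subset V_2$ are angularly separated by a positive constant. Together with the ellipticity of $\mathcal H\phi$, this forces the Minkowski sums $\Sigma_{\tau_1}+\Sigma_{\tau_2}$ of frequency supports to have bounded overlap as $(\tau_1,\tau_2)$ vary, which is the Córdoba--Fefferman orthogonality needed on the bilinear side.

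The inductive step then splits $B_R$ into sub-balls $Q$ of radius $R^{1-\delta}$. On each $Q$, the bounded-overlap yields
\beqq
\|Ef_1 Ef_2\|_{L^2(Q)}^2 \lesssim \sum_{\tau_1,\tau_2}\|Ef_{1,\tau_1}Ef_{2,\tau_2}\|_{L^2(Q)}^2,
\eeqq
and each summand can be bounded through the induction hypothesis $A(R^{1-\delta})$ after a parabolic rescaling of the pair $(\tau_1,\tau_2)$ to unit caps, using that the rescaled phase continues to satisfy the curvature and transversality hypotheses with uniform constants. Interpolating the resulting local $L^2$ bound with an elementary $L^\infty$ estimate, then summing over $Q$ via the $\ell^{p_0}$ triangle inequality after a dyadic pigeonholing of the wave-packet amplitudes, produces a recursive inequality of the schematic form $A(R)\leq C R^{K\delta} A(R^{1-\delta})^\theta$ with some $\theta<1$. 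Iterating this at the rate $\delta\sim \varepsilon$ closes the induction and gives $A(R)\lesssim_\varepsilon R^\varepsilon$.

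The hard part, and the technical heart of the argument, is verifying that \eqref{co1} genuinely produces the correct bounded-overlap/orthogonality property for the bilinear product $Ef_{1,\tau_1}\cdot Ef_{2,\tau_2}$. This amounts to analysing the Jacobian of $(\xi,\eta)\mapsto\nabla\phi(\xi)+\nabla\phi(\eta)$ restricted to the transverse pair of caps and showing that it is uniformly non-singular thanks to the combination of the non-degenerate Hessian and the separation \eqref{co1}; it is precisely here that the elliptic-type condition on $\mathcal H\phi$ is indispensable and cannot be replaced by a mere non-vanishing determinant of the Gauss map. A secondary but not negligible obstacle is arranging the wave-packet and interpolation exponents so that the induction actually closes at the endpoint $p_0$, which forces one to keep track of the loss from the pigeonholing on tube amplitudes and to absorb it into the $R^{K\delta}$ factor.
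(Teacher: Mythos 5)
The paper does not prove Theorem \ref{theo2}; it is cited directly from Lee \cite{Lee} and used as a black box in the subsequent broad-narrow analysis. So there is no ``paper's proof'' to compare against, but your proposal can still be assessed on its own terms, and it contains a genuine gap.

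The central problem is the normalization step: you propose to ``normalize so that the Hessian $\mathcal H\phi$ is bounded and \emph{elliptic}'' via affine rescaling in $\xi$ and a linear change of variables in $x$. This is impossible when the Hessian has eigenvalues of mixed sign, because the signature of $\mathcal H\phi$ is invariant under such transformations. Lee's theorem is stated (and cited in the paper under the title ``Bilinear restriction estimates for surfaces with curvatures of different signs'') precisely to cover the non-elliptic case, e.g.\ the hyperbolic paraboloid $\phi(\xi)=\xi_1\xi_2$; the elliptic case is essentially Tao's theorem. Your later remark that ``the elliptic-type condition on $\mathcal H\phi$ is indispensable and cannot be replaced by a mere non-vanishing determinant'' is therefore directly at odds with the hypothesis of the theorem you are trying to prove, which assumes only $\det\mathcal H\phi\neq 0$ together with the separation condition \eqref{co1}. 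The whole point of \eqref{co1} is to serve as the replacement for ellipticity: it rules out the degenerate configurations (null directions of the Hessian) where transversality of the two wave-packet families would fail, and it is what makes the Minkowski-sum overlap and Jacobian nondegeneracy analysis work in the hyperbolic setting. Your orthogonality step, as written, leans on ellipticity where it should instead be extracted from \eqref{co1} together with nondegeneracy; without that the argument does not close in the case the theorem is actually designed for, and it is also the case needed in the paper, since the model phase there is $x_n\langle M\xi,\xi\rangle$ with $M$ of mixed signature. The surrounding machinery you describe (wave packets at scale $R^{-1/2}$, induction on scales with parabolic rescaling, interpolation of a local $L^2$ bound with $L^\infty$, pigeonholing over tube amplitudes) is indeed the correct framework and matches Lee's strategy, so the gap is localized but essential: the geometric input from \eqref{co1} must replace ellipticity at every point where you currently invoke it.
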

	
	To derive the linear estimate from Theorem \ref{theo2}, an important step is to identify  the exceptional set where the condition \eqref{co1} fails. When the Hessian of $\phi$ has  eigenvalues of  the same sign, the separation of $V_1$ and $V_2$ is sufficient to guarantee the condition \eqref{co1}. However, this fact does not hold true when the Hessian of $\phi$ has eigenvalues with different signs. For example, when $n=2$,  if $\phi_{\rm hyp}(\xi)=\xi_1\xi_2$, the exceptional set may be contained in a small neighborhood of coordinates. For the general phase $\phi_{\rm M}$ which can be viewed as a small perturbation of $\phi_{\rm hyp}$, identifying the exceptional set is a bit tricky. There are a number of papers by Buschenhenke-M\"uler-Vargas\cite{BMV1,BMV2, BMV3, BMV}  which are dedicated  to the study of the restriction estimate associated with the phase $\phi_{\rm M}$. However, it's still murky to find the exceptional set for the phase $\phi_{\rm M}$ in the higher dimensional cases. To circumvent this issue, inspired by the work of \cite{GLMX}, we consider a class of scale-dependent phase functions.  Their exceptional set can be connected  with the  quadratic cases of which  the exceptional set is clear.

	{\bf \noindent Decoupling theorem.}   Assume $\{{\theta}\}$ is a collection of finitely overlapping balls in $\R^{n-1}$ of radius $R^{-1/2}$ which form a cover of $B_1^{n-1}(0)$.
	Define $$ f_{\theta}:=f\kappa_\theta;\; \sum_{\theta}\kappa_\theta=1,\;\forall \xi \in B_1^{n-1}(0)$$
	where $\{\kappa_\theta\} $ is a family of smooth functions which are subjecting to $\{\theta\}$.
	Correspondingly, we decompose $T^\lambda f$ into
	\beqq
	T^\lambda f:=\sum_\theta  T^\lambda f_\theta.
	\eeqq
	We  have the following decoupling theorem for the H\"ormander oscillatory  operator.
	\begin{theorem}\label{theo31}
		Let $T^\lambda$ be a H\"ormander oscillatory integral  operator as in $\eqref{eq:00}$.  If $p\geq \frac{2(n+1)}{n-1}$, then
		\beq
		\big\|\sum_{\theta}T^\lambda f_{\theta}\big\|_{L^p(B_R^n(x_0))}\leq C_\varepsilon R^{\frac {n-1}2-\frac np+\varepsilon}\big(\sum_\theta \|T^\lambda f_\theta\|_{L^p(w_{B_R^n(x_0)})}^p\big)^{\frac{1}{p}}+{\rm RapDec}(R)\|f\|_{2},
		\eeq
		where $w_{B_R^n(x_0)}$ is a non-negative  weight function adapted to the ball $B_R^n(x_0)$ such that
		$$ w_{B^{n}_{R}(x_0)}(x)\lesssim (1+R^{-1}|x-x_0|)^{-L},$$
		for some large constant $L\in \mathbb{N}$.
	\end{theorem}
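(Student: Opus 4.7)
The plan is a scale-dependent induction on $R$ combined with a Pramanik--Seeger approximation of the variable-coefficient operator by a model extension operator. Let $\mathcal{D}(R)$ denote the smallest constant for which
\begin{equation*}
\bigl\|\sum_{\theta}T^\lambda f_\theta\bigr\|_{L^p(B_R^n(x_0))}\leq \mathcal{D}(R)\,\bigl(\sum_\theta \|T^\lambda f_\theta\|_{L^p(w_{B_R^n(x_0)})}^p\bigr)^{1/p}+{\rm RapDec}(R)\|f\|_{2}
\end{equation*}
holds uniformly over all phases $\phi$ satisfying $(H1),(H2)$ with fixed bounds on finitely many derivatives, all $\lambda\geq R$, all $x_0$, and all $f$. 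The target is $\mathcal{D}(R)\leq C_\varepsilon R^{\frac{n-1}{2}-\frac{n}{p}+\varepsilon}$, with the base case $R\lesssim 1$ trivial.

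For the inductive step, I would pick an intermediate scale $\rho$ with $R^{1/2}\leq \rho \leq R^{1-\delta}$ and tile $B_R^n(x_0)=\bigcup_{y}B_\rho^n(y)$. On each sub-ball, Taylor expansion of the phase around $y$ yields
\begin{equation*}
\phi^\lambda(x,\xi)=L_y(x,\xi)+Q_y(x-y,\xi)+E_y(x-y,\xi),\qquad |E_y|\lesssim \rho^{3}/\lambda^{2},
\end{equation*}
where $L_y$ is affine in $x$ and $Q_y$ is quadratic in $x-y$. Hypothesis $(H1)$ makes the mixed Hessian of $L_y+Q_y$ of full rank, and $(H2)$ ensures the non-vanishing Gaussian curvature of the associated hypersurface. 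Since $\rho\leq \lambda^{2/3}$, the error $e^{2\pi i E_y}$ is a smooth bounded amplitude. Thus, after absorbing $L_y$ into an $x$-modulation and an affine change of variables, $T^\lambda$ restricted to $B_\rho^n(y)$ coincides with a paraboloid extension operator up to a bounded smooth perturbation. On each sub-ball I apply the Bourgain--Demeter $\ell^p$-decoupling for the paraboloid at physical scale $\rho$, producing decoupling from $\rho^{-1/2}$-caps $\tau$, and then invoke the induction hypothesis $\mathcal{D}(\rho)$ on each $\tau$ to further decouple $T^\lambda f_\tau$ into its $R^{-1/2}$-caps $\theta$. Summing over $y$ with Minkowski in the outer $\ell^p$ norm and Fubini in the weight produces the recursion
\begin{equation*}
\mathcal{D}(R)\leq C_\varepsilon\, \rho^{\frac{n-1}{2}-\frac{n}{p}+\varepsilon/4}\mathcal{D}(\rho),
\end{equation*}
which, iterated with $\delta=\delta(\varepsilon)$ chosen sufficiently small, closes the induction.

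The main obstacle is to execute the extension-operator approximation so that the perturbation $E_y$ does not spoil the sharpness of paraboloid decoupling: one must verify that each piece $T^\lambda f_\tau$ on $B_\rho^n(y)$ admits essentially the same wave-packet decomposition (up to bounded tube displacement) as the model extension operator whose phase is $L_y+Q_y$. This is exactly the content of the Pramanik--Seeger approximation, which forces the constraint $\rho^3/\lambda^2=O(1)$ and necessitates the scale-dependent formulation of $\mathcal{D}(R)$, since after restricting to a cap $\tau$ and rescaling one still wants the resulting phase to belong to the same admissible class. Verifying this closure, and tracking the $(R/\rho)^{O(\varepsilon)}$ losses through the iteration so that the induction does not degrade, is the technical crux.
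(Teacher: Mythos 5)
Your proposal takes the Beltran--Hickman--Sogge route (restrict to a small physical ball $B_\rho^n(y)$, Taylor-expand the phase in $x$ there, and argue that the resulting operator is close to a constant-coefficient extension operator), whereas the paper's proof is explicitly framed as an alternative in the Pramanik--Seeger spirit. The paper first reduces to an asymptotically flat normal form $\phi_K(x,\xi)=x'\cdot\xi+x_n\langle M\xi,\xi\rangle+\mathrm{E}_K(x,\xi)$ with $|\partial^\alpha_x\partial^\beta_\xi\mathrm{E}_K|\lesssim K^{-2}$ (Lemma~\ref{la11}), then observes via Lemma~\ref{lem6666} that $\widehat{T_K^\lambda f}$ is already concentrated in a $CK^{-2}$-neighborhood of the model quadric, so Bourgain--Demeter is applied on the \emph{full} ball $B_R$ to decouple into $K^{-1}$-caps, and the induction proceeds by parabolic rescaling to $\tilde R = R/K^2$ with $K\sim R^\delta$. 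The localization is in frequency rather than physical space, and the step size is a small power $R^\delta$ rather than $R^{1/2}$, but the two schemes share the same broad shape: both require a scale-dependent admissible class stable under rescaling, which you correctly identify as the technical crux.

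There is, however, a concrete gap in your error analysis on $B_\rho^n(y)$. You write $\phi^\lambda=L_y+Q_y+E_y$ and impose $\rho\leq\lambda^{2/3}$ so that $|E_y|\lesssim\rho^3/\lambda^2=O(1)$, then claim that after absorbing $L_y$ the operator coincides with a paraboloid extension operator up to a bounded smooth perturbation. But an extension operator has a phase \emph{linear} in $x$; after absorbing the affine piece $L_y$ what remains is $Q_y+E_y$, and $Q_y$ is quadratic in $x-y$ with coefficients $\nabla^2_x\phi^\lambda=O(\lambda^{-1})$, so $|Q_y|\sim\rho^2/\lambda$ on $B_\rho^n(y)$. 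This is \emph{not} $O(1)$ unless $\rho\lesssim\lambda^{1/2}$, a strictly stronger restriction than your $\rho\leq\lambda^{2/3}$. Equivalently, $\nabla_x Q_y$ spreads the Fourier support of $T^\lambda f$ over a $\rho/\lambda$-thickening of the model hypersurface, and for Bourgain--Demeter at physical scale $\rho$ to be applicable at its sharp thickness one needs $\rho/\lambda\lesssim\rho^{-1}$, again forcing $\rho\lesssim\lambda^{1/2}$. With $\rho\sim\lambda^{1/2}\sim R^{1/2}$ in the main regime $R\sim\lambda$, the rest of your outline does close: the recursion $\mathcal D(R)\leq C_\varepsilon\rho^{\frac{n-1}{2}-\frac{n}{p}+\varepsilon/4}\mathcal D(\rho)$ with $\rho=R^{1/2}$ iterates $O(\log\log R)$ times and the accumulated constant $C_\varepsilon^{O(\log\log R)}$ is subpolynomial. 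But the declared range $R^{1/2}\leq\rho\leq R^{1-\delta}$ is too wide; for $\rho$ near $R^{1-\delta}$ the quadratic term $Q_y$ has size $\sim R^{1-2\delta}$ and cannot be treated as a bounded amplitude, so the wave-packet comparison with the model extension operator that your argument relies on breaks down at that intermediate scale.
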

	The decoupling theorem for the extension operator was established by Bourgain-Demeter \cite{BD1}. When the phase function satisfies the cinematic curvature condition, the associated variable version of the decoupling theorem was established by Beltran-Hickman-Sogge \cite{BHS}(see also \cite{ILX}). Their method can also be applied to the H\"ormander oscillatory integral operator. A key observation in \cite{BHS} can be roughly formulated as follows: At the small scale of physical space, the variable setting is essentially translation invariant. Hence, the decoupling theorem for the flat version can be brought into play directly at the level of a small scale of physical space.
	
	We present an alternative proof of the theorem \ref{theo31} based on the Pramanik-Seeger's approach \cite{PS}. To be more precise, we first conduct a localization procedure in frequency space. In this setting, the key is to effectively control the error term so that we can directly use the techniques in the translation-invariant setting.

	The rest  is organized as follows: In Section  \ref{pre}, we present some preliminaries which are useful for the proof of the main theorem. In Section \ref{hor}, we prove the sharp $L^p$ estimate for H\"ormander oscillatory integral operator. In Section \ref{dec}, we provide the proof of the decoupling theorem for H\"ormander oscillatory integral operators.
	
	{\bf\noindent  Notations.} For nonnegative quantities $X$ and $Y$, we will write $X\lesssim Y$ to denote the inequality $X\leq C Y$ for some $C>0$. If $X\lesssim Y\lesssim X$, we will write $X\sim Y$. We write $x\mapsto y$ to mean that we replace $x$ by $y$. Dependence of implicit constants on the spatial dimensions or integral exponents such as $p$ will be suppressed; dependence on additional parameters will be indicated by subscripts. For example, $X\lesssim_u Y$ indicates $X\leq CY$ for some $C=C(u)$. We write $A(R)\leq {\rm RapDec}(R) B$ to mean that for any power $\beta\in\mathbb N$, there is a constant $C_\beta$ such that
	\beqq
	| A(R)|\leq C_\beta R^{-\beta}B \,\,\quad\text{for all}\,\; R\geq 1.
	\eeqq
	We will also often abbreviate $\|f\|_{L_x^r(\R^n)}$ to $\|f\|_{L^r}$.  For $1\leq r\leq\infty$,
	we use $r'$ to denote the dual exponent to $r$ such that $\tfrac{1}{r}+\tfrac{1}{r'}=1$. Throughout the paper, $\chi_E$ is the characteristic function of the set $E$.
	We usually denote by $B_r^n(a)$ a ball in $\R^n$ with center $a$ and radius $r$. We will also denote by $B_R^n$ a ball of radius $R$ and arbitrary center in $\R^n$.
	For a function $\varphi\in C^\infty(\mathbb R^n)$ and $r>0$, we define $\varphi_r(x)=r^{-n}\varphi(x/r)$.
	
	We define the Fourier transform on $\mathbb{R}^n$ by
	\begin{equation*}
		\aligned \hat{f}(\xi):= \int_{\mathbb{R}^n}e^{-2\pi  ix\cdot \xi}f(x)\,\dif x:=\mathcal{F}f(\xi).
		\endaligned
	\end{equation*}
	and the inverse Fourier transform by
	\beqq
	\check{g}(x):=\int_{\R^n} e^{2\pi ix\cdot \xi}g(\xi)\dif \xi:=(\mathcal{F}^{-1}g)(x).
	\eeqq

	\section{Basic reductions}\label{pre}
Let $m \in \mathbb{N}$ be the number of the positive eigenvalues of the hypersurfaces $\{\partial_x\phi(x,\xi): (x,\xi)\in B_1^n(0)\times B_1^{n-1}(0)\}$. Instead of dealing with the phase $\phi$ directly, we actually reduce it to a special class of functions. Let $M$ be a diagonal matrix with its entries being either  $-1$ or $1$ in the diagonal. Analytically, we can express  $M$ as follows
	\beqq
	M=-I_{n-1-m}\oplus I_m,
	\eeqq
	for some $1\leq m\leq \lfloor\frac{n-1}{2}\rfloor$\footnote{$\lfloor x\rfloor$ denotes the greatest integer  less than or equal to $x$}.
	\begin{definition}\label{def00}
		Let $K \geq 1$ and the $\phi_K:B_1^n(0)\times B_1^{n-1}(0)\longmapsto  \R$ with
		\beq\label{ph1}
		\phi_K (x,\xi)=x'\cdot \xi +x_n\langle  M\xi,\xi\rangle +{\rm E}_K(x,\xi).
		\eeq
		We say that the phase function $\phi_K(x,\xi)$ is \emph{asymptotically flat} if
		\beq \label{co123}
		|\partial^\alpha_x  \partial^\beta_\xi {\rm E}_K(x,\xi)|\leq C_{\alpha,\beta}K^{-2},\quad (\alpha,\beta)\in \mathbb{N}^{n}\times \mathbb{N}^{n-1},|\alpha|\leq N_{\rm ph}, |\beta|\leq N_{\rm ph},
		\eeq
		where $N_{\rm ph}\in \mathbb{N}$ is a large integer and $C_{\alpha,\beta}>0$ is a constant depending on $\alpha, \beta$ but not on $K$.
	\end{definition}

\begin{remark}
The phase function $\phi_K(x,\xi)$ in \eqref{ph1} depends on the scale of ambient space. 
We can exploit the properties  in \eqref{ph1} and \eqref{co123} in the process of induction on scales argument since the balls shrink after the parabolic rescaling transformation.
\end{remark}

	In the following part, let $R\gg 1, K=K_0R^{\delta}$ for some constants  $K_0>0,\delta>0$ to be chosen later, define the operator $T_K^\lambda$ as follows
	\beq\label{phiK}
	T_K^\lambda  f(x):=\int_{B_1^{n-1}(0)}e^{i \phi_K^\lambda(x,\xi)}\mathfrak{a}^\lambda(x,\xi)f(\xi)d\xi,
	\eeq
where $\phi^\lambda_K,\ \mathfrak{a}^\lambda$ are defined in the same way with $\eqref{eq:00a}$  and $\mathfrak a$ is a smooth cut function in $\R^{n}\times\R^{n-1}$
satisfying:
 ${\rm supp}\;\mathfrak{a}(x,\xi)\subset B_1^{n}(0)\times B_1^{n-1}(0)$ and
		\beq\label{eq:c13}
		|\partial_x^\alpha \partial_\xi^\beta \mathfrak{a}(x,\xi)|\leq \bar{C}_{\alpha,\beta},\; (\alpha,\beta)\in \mathbb{N}^{n}\times \mathbb{N}^{n-1}, |\alpha|\leq N_{\rm am}, |\beta|\leq N_{\rm am},
		\eeq
		for an appropriate large constant $N_{\rm am}\in \mathbb{N}$.
	\begin{lemma}\label{la11}
		Let $T^\lambda$ be an H\"ormander oscillatory integral operator defined by \eqref{eq:00} and $\delta\ll \varepsilon$,  then there exists a function $\phi_K$ which is asymptotically flat   and an input function  $\tilde f$ defined by
		\beq\label{eq:c12}
		\tilde{f}(\xi):=K^{-(n-1)}f(\bar{\xi}+K^{-1}\xi),\;\; \text{for some}\;\bar{\xi}\in B_1^{n-1}(0),
		\eeq
		such that
		\beq
		\|T^\lambda f\|_{L^p(B_R^n(0))}^p\lesssim_{\phi,\varepsilon} R^\varepsilon\sum_{B_{\tilde R}^n \subset \Box_{R} } \|T^{\tilde \lambda}_{\tilde K} \tilde f\|_{L^p(B_{\tilde R}^{n})}^p,
		\eeq
		where $\tilde R:=R/K^2$, $\tilde{K}:=K_0\tilde{R}^{\delta}$, $\Box_R$ is  a rectangular box of dimensions  $R/K\times \cdots\times R/K\times R/K^2$ and $\{B_{\tilde R}^n\}$ is a finitely overlapping partition of $\Box_R$.
	\end{lemma}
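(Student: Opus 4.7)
The plan is to frequency-localize $f$ at scale $K^{-1}$, pigeonhole to a single dominant piece, and then parabolically rescale so that the new phase takes the form required by Definition~\ref{def00}. I would begin by decomposing $B^{n-1}_1(0)$ into $O(K^{n-1})$ finitely overlapping balls $B^{n-1}_{K^{-1}}(\bar\xi_\alpha)$ with an associated smooth partition of unity $\{\chi_\alpha\}$ and writing $f=\sum_\alpha f\chi_\alpha$. Minkowski and H\"older give
\beqq
\|T^\lambda f\|_{L^p(B^n_R(0))}^p \leq K^{(n-1)p}\max_\alpha\|T^\lambda(f\chi_\alpha)\|_{L^p(B^n_R(0))}^p,
\eeqq
and since $K=K_0R^\delta$ with $\delta\ll\varepsilon$ the factor $K^{(n-1)p}$ will be absorbed into $R^\varepsilon$ at the end. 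I then fix the dominant index $\alpha^\ast$, let $\bar\xi=\bar\xi_{\alpha^\ast}$, and define $\tilde f(\eta):=K^{-(n-1)}f(\bar\xi+K^{-1}\eta)$ as in \eqref{eq:c12}.

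Next I would carry out the parabolic rescaling. By (H1) and (H2), after pre-composing with linear changes of variables in $x$ and in $\xi$, I may assume $\partial^2_{x'\xi}\phi(0,\bar\xi)=I$ and $\partial_{x_n}\partial^2_\xi\phi(0,\bar\xi)=2M$ with $M=-I_{n-1-m}\oplus I_m$. I then substitute $\xi=\bar\xi+K^{-1}\eta$ and apply the anisotropic spatial rescaling $y'=K^{-1}(x'+x_nv)$, $y_n=K^{-2}x_n$, where $v=\partial^2_{x_n\xi}\phi(0,\bar\xi)$; this affine map sends $B^n_R(0)$ into the box $\Box_R$ of dimensions $R/K\times\cdots\times R/K\times R/K^2$. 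Taylor-expanding $\phi^\lambda(x,\bar\xi+K^{-1}\eta)=\lambda\phi(x/\lambda,\bar\xi+K^{-1}\eta)$ in $\eta$, I identify (a) the $\eta$-independent piece as a pure $x$-modulation (harmless for $|T^\lambda(f\chi_{\alpha^\ast})|$); (b) the linear and quadratic-in-$\eta$ pieces, after the change of variables and the normalization, as exactly $y'\cdot\eta+y_n\langle M\eta,\eta\rangle$; and (c) all remaining terms, namely the cubic-and-higher Taylor remainder in $\eta$ together with the higher-order variation in $x$ of the Taylor coefficients of $\partial_\xi\phi$ and $\partial^2_\xi\phi$ at $\bar\xi$, as a single expression of the form $\tilde\lambda E_{\tilde K}(y/\tilde\lambda,\eta)$ with $\tilde\lambda=\tilde R=R/K^2$. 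The amplitude $a^\lambda$ transforms into $\mathfrak a(y/\tilde\lambda,\eta)$, and a direct computation shows it satisfies \eqref{eq:c13}.

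The main technical obstacle is to verify the asymptotic-flatness condition \eqref{co123} for $E_{\tilde K}$. In the rescaled coordinates, $\partial_{y'}$ corresponds to $K\partial_{x'}$, $\partial_{y_n}$ corresponds to $K^2\partial_{x_n}$, and each $x$-derivative falling on $\phi(x/\lambda,\cdot)$ carries a factor $\lambda^{-1}$; the terms to be controlled are products of powers of $(x-0)/\lambda$ of degree $\geq 2$ with powers of $K^{-1}\eta$ of degree $\geq 1$, or powers of $K^{-1}\eta$ of degree $\geq 3$ alone. Tallying the powers of $K$ and $\lambda$ from each derivative against the $\tilde\lambda=R/K^2$ normalization yields $|\partial_y^\alpha\partial_\eta^\beta E_{\tilde K}|\lesssim_{\alpha,\beta}\tilde K^{-2}$ uniformly for $|\alpha|,|\beta|\leq N_{\mathrm{ph}}$; the delicate part is that the anisotropy of the $y$-scaling makes $y'$ and $y_n$ derivatives behave differently, so each case must be checked separately. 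Finally, the Jacobian $dx=K^{n+1}dy$ and a partition of $\Box_R$ into finitely overlapping balls $\{B^n_{\tilde R}\}$ give
\beqq
\|T^\lambda f\|_{L^p(B^n_R(0))}^p \lesssim K^{(n-1)p+n+1}\sum_{B^n_{\tilde R}\subset\Box_R}\|T^{\tilde\lambda}_{\tilde K}\tilde f\|_{L^p(B^n_{\tilde R})}^p \lesssim R^\varepsilon\sum_{B^n_{\tilde R}\subset\Box_R}\|T^{\tilde\lambda}_{\tilde K}\tilde f\|_{L^p(B^n_{\tilde R})}^p,
\eeqq
on choosing $\delta\ll\varepsilon$.
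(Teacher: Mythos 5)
Your overall strategy (cover by $K^{-1}$-balls, pigeonhole to a dominant cap, recenter, parabolically rescale, collect the residue as $E_{\tilde K}$) follows the paper's road map, but the way you normalize the phase has a genuine gap and differs from the paper in a way that matters.

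The central issue is that you only perform \emph{affine} changes of variables in $x$ (normalizing $\partial^2_{x'\xi}\phi(0,\bar\xi)=I$, $\partial_{x_n}\partial^2_\xi\phi(0,\bar\xi)=2M$ and then applying $y'=K^{-1}(x'+x_n v)$, $y_n=K^{-2}x_n$), and you then dump the remaining $x$-variation of $\partial_\xi\phi$ into $E_{\tilde K}$. But this residue, being quadratic-and-higher in $x$ and linear in $\eta$, is \emph{not} small enough for the asymptotic flatness requirement $|\partial_y^\alpha\partial_\eta^\beta E_{\tilde K}|\lesssim \tilde K^{-2}$. Concretely, the $\eta$-coefficient $K^{-1}\partial_\xi\phi^\lambda(x,\bar\xi)$ has a Taylor remainder of order $K^{-1}\,\lambda^{-1}|x|^2\,\|\partial_x^2\partial_\xi\phi\|_\infty$, and running your own power count with $|x_n|\lesssim\lambda$, $\tilde\lambda=\lambda/K^2$, $\tilde K\sim K^{1-2\delta}$ yields at best a contribution of size $K^{-1}$ (or worse if $\|\partial_x^2\partial_\xi\phi\|_\infty$ is $O(1)$), which is far larger than $\tilde K^{-2}\sim K^{-2+4\delta}$. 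The paper avoids this entirely by applying the inverse function theorem to produce a \emph{nonlinear} diffeomorphism $x'\mapsto\Phi^\lambda(x',x_n)$ with $\partial_\xi\phi^\lambda(\Phi^\lambda(x',x_n),x_n,\xi_{\tau_0})=x'$ exactly; there is then no quadratic-in-$x$ residue in the $\eta$-linear coefficient at all. A further diffeomorphism $\xi\mapsto\rho(\xi)$ is used to absorb the $x'$-dependence of the quadratic-in-$\xi$ coefficient. These nonlinear changes of variables are not cosmetic; they are precisely what makes the error term have the stated $\tilde K^{-2}$ smallness after rescaling.

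Two further omissions. First, the paper's proof begins by replacing $\phi(x,\xi)$ with $\phi(x/A,\xi)$ for a large $K$-dependent constant $A$ (together with a partition of unity on the support of $a$), so that one may assume $|\partial_x^\alpha\partial_\xi^\beta\phi|\lesssim K^{-2}$ for $2\le|\alpha|\le N_{\rm ph}$. Your write-up does not include this step, and without it even the quadratic-in-$x$, quadratic-in-$\xi$ remainder that survives the inverse-function-theorem normalization is not of order $\tilde K^{-2}$. Second, you identify $\tilde\lambda$ with $\tilde R=R/K^2$, but in the rescaled operator one must take $\tilde\lambda=\lambda/K^2$; these disagree for $R<\lambda$, and conflating them invalidates the power-counting you sketch, since the phase is written as $\tilde\lambda E_{\tilde K}(\cdot/\tilde\lambda,\cdot)$ while the spatial cubes have radius $\tilde R$. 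You would need to redo the verification of asymptotic flatness with the diffeomorphisms $\Phi^\lambda$, $\rho$ and the preliminary dilation in place to obtain the stated conclusion.
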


	\begin{proof}
Without loss of generality, we may assume
	\beqq
	|\partial^\alpha_x \partial^\beta_\xi \phi(x,\xi)|\leq C_{\alpha,\beta} K^{-2},\quad 2\leq  | \alpha|\leq N_{\rm ph},\,|\beta|\leq N_{\rm ph}.
	\eeqq
	Otherwise, we may replace $\phi(x,\xi)$ by $\phi(x/A,\xi)$ where $A$ is a sufficiently large constant depending on $K$. It should be noted that the support of $a(x/A,\xi)$ may be not contained in $B_1^{n}(0)\times B_1^{n-1}(0)$, but this  can be fixed by a partition of unity argument.

Covering $B_1^{n-1}(0)$ by a collection of balls $\{\tau\}$ of radius $K^{-1}$ and define
		$f_\tau:=f\chi_\tau$.
		By the triangle inequality, we have
		\beqq
		\|T^\lambda f\|_{L^p(B_R^n(0))}\leq \sum_{\tau}\|T^\lambda f_\tau\|_{L^p(B_R^n(0))}.
		\eeqq
		Thus, there exits a $\tau_0$ such that
		\beqq
		\sum_\tau \|T^\lambda f_\tau\|_{L^p(B_R^n(0))}\lesssim K^{n-1} \|T^\lambda f_{\tau_0}\|_{L^p(B_R^n(0))}.
		\eeqq
		Without loss of generality, we may assume $\xi_{\tau_0}$ is the center of $\tau_0$ and
		\beqq
		\partial_x^\alpha \phi^\lambda (x,\xi_{\tau_0})=0,\quad \partial_\xi^\beta  \phi^\lambda (0,\xi)=0, \quad \alpha\in \mathbb{N}^n,\,\beta\in \mathbb{N}^{n-1}.
		\eeqq
		Otherwise, we take $\phi^\lambda$ to be
		\beqq
		\phi^\lambda(x,\xi)+\phi^\lambda(0,\xi_{\tau_0})-\phi^\lambda(0,\xi)-\phi^\lambda(x,\xi_{\tau_0}).
		\eeqq
		By an affine transformation in $x$, we may also assume  the unit normal vector of the hypersurface $\{\partial_x\phi^\lambda (0,\xi):\xi \in \tau_0\}$ at $\xi=\xi_{\tau_0}$ equals $(0,\cdots,0,1)$ and $\partial_{\xi\xi}\partial_{x_n}\phi^\lambda(0,\xi_{\tau_0})=M$. Thus we have
		$${\rm rank}\partial_{x'}\partial_\xi \phi^\lambda (x,\xi)=n-1,\quad (x,\xi)\in B^n_1(0)\times B^{n-1}_{K^{-1}}(\xi_{\tau_0}).$$
		Then by the inverse function theorem, there exists a function $\Phi^\lambda(x',x_n)$ such that
		\beqq
		\partial_\xi \phi^\lambda(\Phi^\lambda(x',x_n),x_n,\xi_{\tau_0})=x'.
		\eeqq
		By a change of variables in $\xi$
		\beqq
		\xi \longmapsto \xi+\xi_{\tau_0},
		\eeqq
		 and  Taylor's formula,  we have
		\beq
		\begin{aligned}
			\phi^\lambda(x, \xi+\xi_{\tau_0})&=\partial_\xi\phi^\lambda(x,\xi_{\tau_0})\cdot \xi+\frac{1}{2}\langle \partial_{\xi\xi}^2\phi^\lambda(x,\xi_{\tau_0})\xi,\xi\rangle \\&+3\sum_{|\beta|=3}\frac{\xi^\beta}{\beta!}\int_0^1 (1-t)^2 \partial_{\xi}^\beta\phi^\lambda(x,\xi_{\tau_0} +t\xi)dt.
		\end{aligned}
		\eeq
		We make another change of variables in $x$
		\beqq
		x'\longmapsto  \Phi^\lambda(x',x_n),\  x_n\longmapsto x_n,
		\eeqq
		such that in the new coordinates,  the phase becomes
		\beqq
		\begin{aligned}
			\langle x',\xi  \rangle &+\frac{1}{2}\langle \partial_{\xi\xi}^2\phi^\lambda(\Phi^\lambda(x',x_n),x_n,\xi_{\tau_0})\xi,\xi\rangle\\&+3\sum_{|\beta|=3}\frac{\xi^\beta}{\beta!}\int_0^1 (1-t)^2 \partial_{\xi}^\beta \phi^\lambda(\Phi^\lambda(x',x_n),x_n,\xi_{\tau_0} +t\xi)dt.
		\end{aligned}
		\eeqq
		Let $\mathcal A_\phi^\lambda(x,\xi_{\tau_0}):=\lambda\mathcal A_\phi(x/\lambda,\xi):=\partial_{\xi\xi}^2\phi^\lambda(\Phi^\lambda(x',x_n),x_n,\xi_{\tau_0})$,  then  a Taylor expansion in $x$ yields
		\beqq
		\begin{aligned}
			\langle x',\xi  \rangle &+\frac{1}{2}x_n\big(\partial_{x_n}\langle\mathcal A_{\phi}^\lambda (x,\xi_{\tau_0})\xi,\xi\rangle\big)\big\vert_{x=0}+\frac{1}{2}x'\cdot\big(\partial_{x'}\langle\mathcal A_\phi^\lambda(x,\xi_{\tau_0})\xi,\xi\rangle\big)\big\vert_{x=0}\\&
			+2\sum_{|\alpha|=2}\frac{x^\alpha}{\alpha!}\int_0^1 (1-t) \partial_{z}^\alpha\langle \mathcal A_\phi^\lambda(z,\xi_{\tau_0}) \xi,\xi\rangle\big\vert_{z=tx} dt
			\\&+3\sum_{|\beta|=3}\frac{\xi^\beta}{\beta!}\int_0^1 (1-t)^2 \partial_{\xi}^\beta \phi^\lambda(\Phi^\lambda(x',x_n),\xi_{\tau_0} +t\xi)dt.
		\end{aligned}
		\eeqq
		 We make a further diffeomorphic change of variables in $\xi\longmapsto \rho(\xi)$ such that in the new coordinates, $
		\langle x',\xi  \rangle+\frac{1}{2}x'\cdot \big(\partial_{x'}\langle\mathcal A_\phi^\lambda(x,\xi_{\tau_0})\xi,\xi\rangle\big)\big\vert_{x=0} $
		becomes $\langle x',\xi\rangle$. It is obvious that $\rho(0)=0$, thus a further Taylor expansion in $\xi$ for
$\frac12x_n(\partial_{x_n}\langle \mathcal A_\phi^\lambda(x,\xi_{\tau_0})\rho(\xi),\rho(\xi)\rangle)\vert_{x=0}$, up to an affine transfromation in $\xi$,
we have
        \beqq
\frac12x_n(\partial_{x_n}\langle \mathcal A_\phi^\lambda(x,\xi_{\tau_0})\rho(\xi),\rho(\xi)\rangle)\vert_{x=0}=\frac{1}{2}x_n\langle M\xi,\xi\rangle+x_nr(\xi),
        \eeqq
        where $r(\xi)=O(|\xi|^3)$. Define
		\beqq
		\begin{aligned}
			{\rm E}(x,\xi):=2\sum_{|\alpha|=2}\frac{x^\alpha}{\alpha!}&\int_0^1 (1-t) \partial_{z}^\alpha\langle \mathcal A_\phi(z,\xi_{\tau_0}) \rho(\xi),\rho(\xi )\rangle\big\vert_{z=tx}dt
			\\&+3\sum_{|\beta|=3}\frac{(\rho(\xi))^\beta}{\beta!}\int_0^1 (1-t)^2 \partial_{\xi}^\beta\phi(\Phi(x',x_n),\xi_{\tau_0} +t\rho(\xi))dt+x_nr(\xi).
		\end{aligned}
		\eeqq
	Correspondingly,  the phase function becomes
\beqq
\langle x',\xi\rangle+\frac12x_n\langle M\xi,\xi\rangle+{\rm E}^\lambda(x,\xi),
\eeqq
where ${\rm E}^\lambda(x,\xi):=\lambda {\rm E}(x/\lambda,\xi)$.
Define $\tilde{\lambda}:=\lambda/K^2, \tilde R:=R/K^2, \tilde K:=K_0(\tilde R)^\delta$.
We perform a parabolic rescaling
		\beqq
		\xi\longmapsto K^{-1}\xi, \; x'\longmapsto Kx', \; x_n\longmapsto  K^2 x_n.
		\eeqq
	The phase function  becomes
		\beqq
		\phi_{\tilde K}^{\tilde \lambda}(x,\xi):=\langle x',\xi \rangle+x_n\langle M\xi,\xi\rangle+{\rm E}_{\tilde K}^{\tilde \lambda}(x,\xi),
		\eeqq
		where
		$${\rm E}_{\tilde K}(x,\xi):=K^2{\rm E}(K^{-1}x',x_n, K^{-1}\xi),$$
and ${\rm E}_{\tilde K}^{\tilde \lambda}(x,\xi):=\tilde \lambda{\rm E}_{\tilde K}(x/\tilde \lambda,\xi)$.  Finally, we have 
\begin{align}T_{\tilde K}^{\tilde \lambda}\tilde f(x)=\int_{B_1^{n-1}(0)}e^{2\pi i\phi_{\tilde K}^{\tilde\lambda}(x,\xi)}\mathfrak a^{\tilde \lambda}(x,\xi)\tilde f(\xi)d\xi.\end{align}
		Note our assumption on $\tilde{\phi}$, it is straightforward to verify that ${\rm E}_{\tilde K}(x,\xi)$
		satisfies the condition
		\beq \label{co12}
		|\partial^\alpha_x  \partial^\beta_\xi {\rm E}_{\tilde K}(x,\xi)|\leq C_{\alpha,\beta}{\tilde K}^{-2},\quad (\alpha,\beta)\in \mathbb{N}^{n}\times \mathbb{N}^{n-1},|\alpha|\leq N_{\rm ph },  |\beta|\leq N_{\rm ph}.
		\eeq
		Thus $\phi_{\tilde K}$ is asymptotically flat. Under the new coordinates, the phase function becomes $\phi^{\tilde \lambda}_{\tilde K}$. By tracking the change of variables of $\xi$ and $x$, it is easy to see the ball $B_R^n$ is transformed into another region  which  is contained in a box $\Box_{R}$ of dimensions $R/K\times \cdots \times R/K\times R/K^2$ and by choosing $K_0$ sufficiently large, the condition $\eqref{co123},\eqref{eq:c13}$ can be ensured.
	\end{proof}

	\section{proof of the sharp $L^p$ estimate}	\label{hor}
	{\bf \noindent Reduction:}
	To prove Theorem \ref{theo1}, it suffices to show for each $1\leq R\leq \lambda$,
	\beq \label{re00}
	\|T^\lambda f\|_{L^p(B_R^n(0))}\leq C_\varepsilon R^\varepsilon\|f\|_{L^p(B^{n-1}_1(0))}
	\eeq
    under the assumption $\eqref{eq:mainaa}$.
	The dependence of the implicit constant on $n,p,\phi$ is compressed. By Lemma \ref{la11}, it is reduced to showing  for each $1 \leq R \leq \lambda$,
	\beq\label{eq:123}
	\|T^\lambda_K f\|_{L^p(B_R^n)}\leq C_\varepsilon R^\varepsilon \|f\|_{L^p(B^{n-1}_1(0))},
	\eeq
for all $T_K^\lambda$ as in $\eqref{phiK}$.
	Indeed, by Lemma \ref{la11}, we have
	\beqq
	\|T^\lambda f\|_{L^p(B_R^n(0))}\lesssim_{\phi,\varepsilon} R^\varepsilon\sum_{B_{\tilde R}^n \subset \Box_{R} } \|T^{\tilde \lambda}_{\tilde K} \tilde f\|_{L^p(B_{\tilde R}^{n})}^p,
	\eeqq
where \[T^{\tilde\lambda}_{\tilde K}\tilde f(x)=\int_{B_1^{n-1}(0)}e^{2\pi i\phi_{\tilde K}^{\tilde\lambda}(x,\xi)}\mathfrak a^{\tilde\lambda}(x,\xi)\tilde f(\xi)d\xi\]
and $\tilde f$ is defined by $\eqref{eq:c12}$.
	Note that $K=K_0R^\delta$, there exists a $\bar{B}_{\tilde R}^n\subset \Box_R$ such that
	\beqq
	\sum_{B_{\tilde R}^n \subset \Box_{R} } \|T^{\tilde \lambda}_{\tilde K} \tilde f\|_{L^p(B_{\tilde R}^{n})}^p\lesssim K^{n-1}\|T^{\tilde \lambda}_{\tilde K} \tilde f\|_{L^p(\bar{B}_{\tilde R}^{n})}^p.
	\eeqq
	From $\eqref{eq:123}$, it follows that
	\beqq
	\|T^{\tilde \lambda}_{\tilde K} \tilde f\|_{L^p(\bar{B}_{\tilde R}^{n})}^p\leq C_\varepsilon R^\varepsilon \|\tilde f\|_{L^p(B^{n-1}_1(0))}.
	\eeqq
	By choosing $\delta=\varepsilon^2\ll 1$, we will obtain the desired result $\eqref{re00}$.
	
	Let $1\leq R\leq \lambda$ and  $Q_p(\lambda,R)$ be the optimal constant such that
	\beq
	\|T^\lambda_K  f\|_{L^p(B_R^n)}\leq Q_p(\lambda,R)\|f\|_{L^p(B^n_1(0))}
	\eeq
	holds for all asymptotically flat phase $\phi_K$ in the Definition \ref{def00} and for all $\mathfrak a$ satisfying \eqref{eq:c13}, and uniformly for all $f\in L^p(B^{n-1}_1(0))$.
	Then \eqref{eq:123} is reduced to showing:
	\beq\label{eq:124}
	Q_p(\lambda,R)\leq C_\varepsilon R^\varepsilon.
	\eeq
 We proceed to prove \eqref{eq:124} via an induction on scale argument. For this purpose, we first set up some basic preparatory tools.

	\subsection{Parabolic rescaling and flat decoupling}
	In this section we establish the parabolic rescaling lemma  which connects the estimates at different scales and plays a critical role in the induction argument. To that end, we first prove an auxiliary proposition.

	\begin{proposition}\label{sds}
		Let $\mathcal{D}$ be a maximal $R^{-1}$-separated discrete subset of $\Omega\subset B_1^{n-1}(0)$, then
		\beq\label{sds1}
		\Big\|\sum_{\xi_\theta\in\mathcal{D}}e^{2\pi i\phi^\lambda_K(\cdot,\xi_\theta)}F(\xi_\theta)\Big\|_{L^p(B_R^n(0))}\lesssim Q_p(\lambda, R)R^{\frac{n-1}{p^\prime}} \|F\|_{\ell^p(\mathcal{D})}\eeq for all $F:\mathcal{D}\rightarrow\mathbb{C}$, where
		\beqq
		\|F\|_{\ell^p(\mathcal{D})}:=\Big(\sum_{\xi_\theta \in \mathcal{D}}|F(\xi_\theta)|^p\Big)^{\frac{1}{p}},
		\eeqq
		for $1\leq p<\infty$. \end{proposition}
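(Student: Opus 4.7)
My plan is to deduce the proposition from the definition of $Q_p(\lambda,R)$ via a duality argument combined with a Bernstein-type sampling inequality. By duality,
\beqq
\Big\|\sum_{\xi_\theta\in\mathcal{D}}e^{2\pi i\phi^\lambda_K(\cdot,\xi_\theta)}F(\xi_\theta)\Big\|_{L^p(B_R^n(0))}=\sup_{\|g\|_{L^{p'}(B_R^n(0))}=1}\Big|\sum_{\xi_\theta\in\mathcal{D}}F(\xi_\theta)\,\overline{H(\xi_\theta)}\Big|,
\eeqq
where $H(\xi):=\int_{B_R^n(0)}\overline{g(x)}\,e^{2\pi i\phi^\lambda_K(x,\xi)}\,dx$. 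After a routine partition of unity one may assume that the amplitude $\mathfrak{a}^\lambda\equiv 1$ on the relevant piece of $B_R^n(0)\times B_1^{n-1}(0)$, so that $H$ is identified with $(T^\lambda_K)^{*}g$. H\"older's inequality in $\theta$ then reduces the proposition to the sampling inequality
\beqq
\Big(\sum_{\xi_\theta\in\mathcal{D}}|H(\xi_\theta)|^{p'}\Big)^{1/p'}\lesssim R^{\tfrac{n-1}{p'}}\|H\|_{L^{p'}(B_1^{n-1}(0))},
\eeqq
which, combined with the adjoint form $\|H\|_{L^{p'}(B_1^{n-1}(0))}\leq Q_p(\lambda,R)\|g\|_{L^{p'}(B_R^n(0))}$ of the definition of $Q_p(\lambda,R)$, yields the desired estimate.

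The sampling inequality in turn follows from the fact that $H$ is essentially smooth on the scale $R^{-1}$. Differentiating under the integral and using the explicit form \eqref{ph1} of $\phi_K$ together with the asymptotic flatness \eqref{co123}, and noting the relation $K^2\gtrsim\lambda/R$ available from the reduction in Lemma~\ref{la11}, one verifies $|\partial_\xi^\alpha\phi^\lambda_K(x,\xi)|\lesssim R^{|\alpha|}$ uniformly for $x\in B_R^n(0)$, and hence $|\partial_\xi^\alpha H(\xi)|\lesssim R^{|\alpha|}\|g\|_{L^1(B_R^n(0))}$ for every multi-index $\alpha$. Integration by parts then shows that the Fourier transform $\widehat{H}$ is rapidly decaying off $B_{CR}^{n-1}(0)$, so a local Bernstein-type estimate of the form
\beqq
|H(\xi_\theta)|^{p'}\lesssim R^{n-1}\int_{B(\xi_\theta,R^{-1})}|H(\xi)|^{p'}\,d\xi+\mathrm{RapDec}(R)\,\|g\|_{L^1(B_R^n(0))}^{p'}
\eeqq
is available. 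Summing over the $R^{-1}$-separated set $\mathcal{D}$ and exploiting the bounded overlap of the balls $\{B(\xi_\theta,R^{-1})\}$ then produces the sampling inequality modulo a negligible tail.

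The main technical obstacle is the rigorous justification of this Bernstein-type pointwise bound, since $H$ is only approximately band-limited due to the non-linearity of $\phi^\lambda_K$ in $\xi$. I would handle this by a standard smoothing trick: pick a Schwartz function $\chi$ on $\R^{n-1}$ with $\chi(0)=1$ and $\widehat{\chi}$ compactly supported, set $\chi_R(\eta)=R^{n-1}\chi(R\eta)$, and write
\beqq
H(\xi_\theta)=\int H(\xi)\,\chi_R(\xi_\theta-\xi)\,d\xi+\text{remainder},
\eeqq
where the remainder, after repeated integration by parts against the derivative bounds established above, is of size $\mathrm{RapDec}(R)\,\|g\|_{L^1(B_R^n(0))}$. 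Applying Jensen/H\"older to the main term yields the desired pointwise-to-local-average bound, while the tail is absorbed into the $\mathrm{RapDec}(R)$ error term and is ultimately negligible against the bound $\|F\|_{\ell^p}\|g\|_{L^{p'}}$ needed for the proposition.
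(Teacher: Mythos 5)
Your strategy—duality to pass to $H=(T^\lambda_K)^\ast g$, a Bernstein/sampling inequality for $H$ on the $R^{-1}$-separated set $\mathcal D$, and then the adjoint bound $\|H\|_{L^{p'}}\leq Q_p(\lambda,R)\|g\|_{L^{p'}}$—is a genuinely different route from the paper's, which works directly on the physical side by expressing the discrete exponential sum as a weighted superposition of honest operators $T^\lambda_K f_k$ via the Fourier-series device of Lemma~11.8 in \cite{GHI}, and then feeds the disjointness of the supports of the bumps $\eta_\theta$ into the definition of $Q_p$. As an overall scheme your approach is clean and consistent with the form of the constant $R^{(n-1)/p'}$. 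However, there is a real gap at the key analytic step.

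The assertion that ``$K^2\gtrsim\lambda/R$ is available from the reduction in Lemma~\ref{la11}'' is false. In the reduction $K=K_0R^\delta$ with $\delta=\varepsilon^2\ll1$, so $K^2\sim R^{2\delta}$; the only constraint on $\lambda$ is $R\leq\lambda$, and when $\lambda\gg R^{1+2\delta}$ the inequality $K^2\gtrsim\lambda/R$ fails. Without it, the bound $|\partial_\xi(\lambda E_K(x/\lambda,\xi))|\leq\lambda K^{-2}$ is not $\lesssim R$ on $B_R^n(0)$, so the claimed derivative bound $|\partial_\xi^\alpha H(\xi)|\lesssim R^{|\alpha|}\|g\|_{L^1}$ and the consequent band-limitedness of $H$ at frequency $\sim R$ do not follow, and the sampling inequality is unproved. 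What actually makes the $\xi$-derivatives of $\phi^\lambda_K$ of size $O(R^{|\alpha|})$ on $B_R^n(0)$ is that the error $E_K$ produced by the reduction vanishes at $x=0$ (this can be traced through the construction of $E$ in the proof of Lemma~\ref{la11}), whence by the mean value theorem $|\lambda\,\partial_\xi^\alpha E_K(x/\lambda,\xi)|\lesssim K^{-2}|x|\leq K^{-2}R$ for $|x|\leq R$; note this is not recorded in Definition~\ref{def00}, and your argument does not invoke it. Until the derivative bound is established on correct grounds the proof does not close. (Separately and more minor: the smoothing kernel should be normalized so that $\widehat\chi\equiv1$ on a neighbourhood of the origin rather than $\chi(0)=1$, and the $\mathrm{RapDec}(R)$ control of the tail in the intermediate range $|\eta|\sim R$ needs a bit more care than sketched; these are repairable once the derivative bound is in place.)
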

	\begin{proof}
		Let $\eta$ be a bump smooth function on $\mathbb{R}^{n-1}$, which is  supported on $B_2^{n-1}(0)$ and equals to $1$ on $B_1^{n-1}(0)$. For each $\xi_\theta\in\mathcal{D}$, we set $\eta_{\theta}(\xi):=\eta(10R(\xi-\xi_\theta))$. 
		In exactly the same way as in the proof of Lemma 11.8 of \cite{GHI}, we have
		\beq\label{sds2}
		\Big| \sum_{\xi_\theta\in\mathcal{D}}e^{2\pi i\phi^\lambda_K(\cdot,\xi_\theta)}F(\xi_\theta)\Big|\lesssim R^{n-1}\sum_{k\in\mathbb{Z}^n}(1+|k|)^{-(n+1)}|T^\lambda_K f_k(x)|,\eeq
		where
		\beqq f_k(\xi ):=\sum_{\xi_\theta\in\mathcal{D}}F(\xi_\theta)c_{k,\theta}(\xi)\eta_\theta(\xi)\eeqq
		with  $\|c_{k,\theta}(\xi)\|_\infty\leq1$. By the definition of $Q_p(\lambda, R)$ and \eqref{sds2}, we get
		\beqq
		\Big\|\sum_{\xi_\theta\in\mathcal{D}}e^{2\pi i\phi^\lambda_K(\cdot,\xi_\theta)}F(\xi_\theta)\Big\|_{L^p(B_R^n(0))}\lesssim Q_p(\lambda, R)R^{n-1}\sum_{k\in\mathbb{Z}^n}(1+|k|)^{-(n+1)}\|f_k\|_{L^p(B_1^{n-1}(0))}.\eeqq
		The supports of $\{\eta_\theta\}$ are pairwise disjoint, for any $q\geq1$, we have
		\beqq
		\|f_k\|_{L^q(B^{n-1}_2(0))}\lesssim R^{-\frac{n-1}q} \|F\|_{\ell^q(\mathcal{D})}.\eeqq
		Thus we get
		\begin{align*}
			\Big\|\sum_{\xi_\theta\in\mathcal{D}}e^{2\pi i\phi^\lambda_K(\cdot,\xi_\theta)}F(\xi_\theta)\Big\|_{L^p(B_R^n(0))}&\lesssim Q_p(\lambda,R)R^{n-1}\sum_{k\in\mathbb{Z}^n}(1+|k|)^{-(n+1)}R^{-\frac{n-1}p} \|F\|_{\ell^p(\mathcal{D})}\\
			&\lesssim Q_p(\lambda, R)R^{\frac{n-1}{p^\prime} }\|F\|_{\ell^p(\mathcal{D})}.
		\end{align*}
	\end{proof}
	
	\begin{lemma}{\label{rescaling}}{(Parabolic rescaling)}
		Let $1\leqslant R\leqslant \lambda$, and $f$ be supported in  a ball of radius $K^{-1}$, where $1\leqslant K\leqslant R$. Then for all $p\geqslant2$ and $\delta>0$, we have
		\beq
		\|T^\lambda_K f\|_{L^p(B_R^n(0))}\lesssim_\delta Q_p\Big(\frac{\lambda}{K^2},\frac{R}{K^2}\Big)R^\delta K^{\frac{2n}p-(n-1)}\|f\|_{L^p(B_1^{n-1}(0))}.
		\eeq
	\end{lemma}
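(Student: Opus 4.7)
My plan is to execute the classical parabolic rescaling adapted to the asymptotically flat setting of Definition \ref{def00}. Let $\xi_0$ denote the center of the $K^{-1}$-ball containing $\mathrm{supp}\,f$. I would first substitute $\xi = \xi_0 + K^{-1}\eta$ in the $\xi$-integral and set $g(\eta) := K^{-(n-1)/p} f(\xi_0 + K^{-1}\eta)$, so that $\|g\|_{L^p} = \|f\|_{L^p}$. Taylor expanding the principal quadratic part $x_n\langle M\xi,\xi\rangle$ about $\xi = \xi_0$ produces, modulo a pure-$x$ phase that can be absorbed into $g$, the linear-in-$\eta$ piece $K^{-1}(x' + 2x_n M\xi_0)\cdot\eta$ and the quadratic $K^{-2}x_n\langle M\eta,\eta\rangle$, while the error term $\lambda E_K(x/\lambda,\xi)$ contributes a linear-in-$\eta$ piece $\lambda K^{-1}\partial_\xi E_K(x/\lambda,\xi_0)\cdot\eta$ together with a second-order $\xi$-remainder.

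Next I would perform the physical-space change of variables
\[
\tilde x' := K^{-1}\bigl(x' + 2x_n M\xi_0\bigr) + \lambda K^{-1}\partial_\xi E_K\bigl(x/\lambda,\xi_0\bigr),\qquad \tilde x_n := K^{-2} x_n,
\]
a smooth diffeomorphism (by \eqref{co123}) with Jacobian $K^{n+1}$. Including the $\lambda K^{-1}\partial_\xi E_K$ correction is critical: it absorbs the linear-in-$\eta$ contribution generated by $E_K$, so that in the new coordinates the phase becomes $\tilde x'\cdot\eta + \tilde x_n\langle M\eta,\eta\rangle + \tilde\lambda \tilde E_{\tilde K}(\tilde x/\tilde\lambda,\eta)$ with $\tilde\lambda := \lambda/K^2$, where $\tilde E_{\tilde K}$ is a pure second-order $\xi$-remainder. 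Careful bookkeeping of the chain-rule factors $K/\lambda$ (for $\tilde x'$-derivatives) and $K^2/\lambda$ (for $\tilde x_n$-derivatives) together with the $K^{-2}$-bounds on $\partial^{\alpha}_x\partial^{\beta}_\xi E_K$ then gives $|\partial^{\alpha}_{\tilde x}\partial^{\beta}_\eta \tilde E_{\tilde K}| \lesssim K^{-2}\le \tilde K^{-2}$ with $\tilde K := K_0\tilde R^\delta$, $\tilde R := R/K^2$, confirming that the new phase is asymptotically flat. The amplitude transforms into a new $\tilde{\mathfrak a}^{\tilde\lambda}$ satisfying \eqref{eq:c13} with uniform constants.

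Under this rescaling the image $\Omega$ of $B_R^n(0)$ is contained in a slab-like box of dimensions $R/K\times\cdots\times R/K\times R/K^2$ rather than a ball of radius $\tilde R$; however, because $\mathrm{supp}\,g\subset B_1^{n-1}(0)$, the stationary-phase condition $\tilde x' + 2\tilde x_n M\eta + O(\tilde K^{-2}) = 0$ with $\eta\in B_1^{n-1}(0)$ forces $|\tilde x'|\lesssim |\tilde x_n|$, so that $T^{\tilde\lambda}_{\tilde K}g$ is concentrated, modulo $\mathrm{RapDec}(R)$ tails obtained by repeated integration by parts in $\eta$, in a single ball $B_{\tilde R}^n$ near the origin. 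Applying the definition of $Q_p(\tilde\lambda,\tilde R)$ on this ball and absorbing the remaining tails into an $R^\delta$-loss gives $\|T^{\tilde\lambda}_{\tilde K}g\|_{L^p(\Omega)} \lesssim R^\delta Q_p(\tilde\lambda,\tilde R)\|g\|_{L^p}$. Combining with the Jacobian factor $K^{(n+1)/p}$ and the amplitude factor $K^{-(n-1)/p'}$ collected from the $\xi$- and $x$-substitutions, the final exponent of $K$ works out to $(n+1)/p - (n-1)/p' = 2n/p - (n-1)$, as required.

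The main obstacle is verifying the asymptotic flatness of $\tilde E_{\tilde K}$: the first-order $E_K$-correction in the definition of $\tilde x'$ must be sized precisely to cancel the linear-in-$\eta$ perturbation generated by $E_K$, since otherwise the naive rescaling produces a spurious $K^{-1}$-sized piece in $\tilde E_{\tilde K}$ that would destroy the flatness required at the next scale. A secondary obstacle is ensuring that the non-stationary phase estimate outside the cone $|\tilde x'|\lesssim |\tilde x_n|$ is uniform enough to contribute only rapidly decreasing tails, which again relies on the $\tilde K^{-2}$-bounds on $\partial_\eta\tilde E_{\tilde K}$.
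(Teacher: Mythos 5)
Your set-up — the substitution $\xi=\xi_0+K^{-1}\eta$, the physical change of variables incorporating the shear $\lambda K^{-1}\partial_\xi E_K(x/\lambda,\xi_0)$ to absorb the linear-in-$\eta$ contribution of $E_K$, the verification that the rescaled error satisfies the $\tilde K^{-2}$-bounds, and the exponent bookkeeping $(n+1)/p-(n-1)/p'=2n/p-(n-1)$ — is sound and parallels the paper's reduction in Lemma~\ref{la11}. The problem is the concentration claim that replaces the rest of the argument. After rescaling the $\eta$-gradient of the phase is
\[
\tilde x'+2\tilde x_n M\eta+\tilde\lambda\,\partial_\eta\tilde E_{\tilde K}(\tilde x/\tilde\lambda,\eta),
\]
and since Definition~\ref{def00} only gives $|\partial_\eta\tilde E_{\tilde K}|\lesssim\tilde K^{-2}$, the last term is $O(\tilde\lambda\tilde K^{-2})$, not $O(\tilde K^{-2})$ as you wrote. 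Because the Lemma only assumes $R\leq\lambda$, the quantity $\tilde\lambda=\lambda/K^{2}$ is unbounded in terms of $\tilde R=R/K^{2}$; once $\lambda\gg RK^{2}$ this error dwarfs $\tilde R$, the stationary set spreads over the long $R/K$-directions of the image box, and repeated integration by parts in $\eta$ gives no $\mathrm{RapDec}(R)$ decay outside $B^n_{\tilde R}$. A concrete admissible example is $E_K(x,\xi)=cK^{-2}\xi_1^2$; after your shear the surviving error is $c\tilde\lambda K^{-2}\eta_1^{2}$, whose $\eta$-gradient is of size $\sim\tilde\lambda K^{-2}$, exceeding $\tilde R$ exactly when $\lambda>RK^{2}$. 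Nothing in Definition~\ref{def00} forces $E_K(0,\cdot)\equiv0$, so one cannot assume $\tilde E_{\tilde K}(0,\eta)=0$ (which is what a concentration argument would really need), and your vague appeal to "absorbing tails into an $R^\delta$-loss" has no mechanism behind it.

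The actual content of the Lemma is precisely to control $T^{\tilde\lambda}_{\tilde K}\tilde f$ on the full anisotropic box $\Box_R$ of dimensions $R/K\times\cdots\times R/K\times R/K^{2}$, not on a single $\tilde R$-ball. The paper reduces to showing $\|T^\lambda_K f\|_{L^p(\Box(R,R'))}\lesssim_\delta Q_p(\lambda,R)R^\delta\|f\|_{L^p}$, covers $\Box(R,R')$ by finitely-overlapping $R$-balls, invokes the discretized $Q_p$-estimate of Proposition~\ref{sds} on each ball, passes between balls via the locally constant mollification $T^\lambda_{K,\theta}f_\theta\approx T^\lambda_{K,\theta}f_\theta*\eta_{R^{1-\delta}}$ (this is the genuine source of the $R^\delta$-loss), and closes with the trivial $L^2$–$L^\infty$ interpolation bound for $T^\lambda_{K,\theta}f_\theta$. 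Your sketch omits all of this machinery; it is the core of the proof and must be supplied.
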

	\begin{proof}
		Without loss of generality, we may assume ${\rm supp\,}f\subset B_{K^{-1}}^{n-1}(\bar \xi)$.
		In the same argument as in Section \ref{pre}, we obtain
		\beqq
		\|T^\lambda_Kf\|_{L^p(B_R^n(0))}\lesssim_\delta K^{\frac{n+1}p}\|\widetilde{T}^{\tilde \lambda}_{\tilde K}\tilde{f}\|_{L^p(\Box_R)},
		\eeqq
		where  $\Box_R$ and $\tilde f$ are  defined in the Lemma \ref{la11} and
\begin{align}\tilde \lambda=K^{-2}\lambda,\quad \tilde K=K^{1-2\varepsilon^2}.\end{align}
 Note that for $q\geq 1$,
		\beqq
		\|\tilde{f}\|_{L^q(B^{n-1}_1(0))}\leq K^{-(n-1)+(n-1)/q}\|f\|_{L^q(B^{n-1}_1(0))},\eeqq
		it suffices to show that
		\beqq \|\widetilde{T}^{\tilde  \lambda}_{\tilde K}\tilde{f}\|_{L^p(\Box_R)}\lesssim_\delta Q_p\Big(\tilde \lambda,\tilde R\Big)R^\delta \|\tilde{f}\|_{L^p(B^{n-1}_1(0))}.\eeqq
		To simplify  notations, we just need to show
		\beq \|T^{\lambda}_Kf\|_{L^p(\Box(R,R'))}\lesssim_\delta Q_p(\lambda, R)R^\delta \|f\|_{L^p(B^{n-1}_1(0))}\eeq
		for all $1\ll R\leq R^\prime\leq \lambda$ and $\delta>0$, where
		$$\Box(R,R'):=\left\{x=(x',x_n)\in\mathbb{R}^n:\left(\frac{|x^\prime|}{R^\prime}\right)^2+\left(\frac{|x_n|}{R}\right)^2\leq1\right\}.$$
		Choosing a collection of essentially disjoint $R^{-1}$-balls $\theta$ which covers $B^{n-1}_1(0)$, we denote the center of $\theta$ by $\xi_\theta$ and  decompose $f$ into $f=\sum_\theta f_\theta$. Set
		\beqq
		T^\lambda_{K,\theta}f(x):=e^{-2\pi i\phi^\lambda_K(x,\xi_\theta)}T^\lambda_K f(x),\eeqq
		and we rewrite
		\beqq
		T^\lambda_K f(x)=\sum_\theta e^{2\pi i\phi^\lambda_K(x,\xi_\theta)}T^\lambda_{K,\theta} f_\theta(x).\eeqq
		For sufficiently small $\delta>0$, we may also write
		\beq
		T^\lambda_{K,\theta} f_\theta(x)=T^\lambda_{K,\theta} f_\theta*\eta_{R^{1-\delta}}(x)+{\rm RapDec}(R)\|f\|_{L^2(B^{n-1})},\eeq
		where $\eta$ is a Schwartz function on $\R^n$ and has Fourier support on $B^n_{2}(0)$, and $\hat\eta=1$ on $B^{n}_1(0)$. Then
 $|\eta|$ admits a smooth rapidly decreasing majorant $\zeta:\mathbb{R}^n\rightarrow[0,+\infty)$ which satisfies
		\beq\label{zeta}
		\zeta_{R^{1-\delta}}(x)\lesssim R^\delta \zeta_{R^{1-\delta}}(y) \quad \text{if}\quad |x-y|\lesssim R.\eeq
		Cover $\Box(R,R')$ by a finitely-overlapping $R$-balls $\{B_R^n\}$. For any $B_R^n(\bar x)$ in this cover and for $z\in B_R^n(0)$, we have
		\beqq
		|T^\lambda_K f(\bar{x}+z)|\lesssim R^\delta\int_{\mathbb{R}^n}\Big|\sum_{\theta} e^{2\pi i\phi^\lambda_K(\bar{x}+z,\xi_\theta)}T^\lambda_{K,\theta} f_\theta(y)\Big| \zeta_{R^{1-\delta}}(\bar{x}-y)dy.\eeqq
		Taking the $L^p$-norm in $z$ and using Proposition \ref{sds} for the phase $\phi^\lambda_K(\bar{x}+\cdot,\xi_\theta)$, we have
		\begin{align*}
			\|T^\lambda_K f(\bar x+\cdot)\|_{L^p(B_R^n(0))}& \lesssim  R^\delta\int_{\mathbb{R}^n}\Big\|\sum_{\theta} e^{2\pi i\phi^\lambda_K(\bar{x}+z,\xi_\theta)}T^\lambda_{K,\theta} f_\theta(y)\Big\|_{L^p(B_R^n(0))} \zeta_{R^{1-\delta}}(\bar{x}-y)dy \\
			& \lesssim Q_p(\lambda,R) R^{\frac{n-1}{p^\prime}} R^\delta\int_{\mathbb{R}^n}\|T^\lambda_{K,\theta} f_\theta(y)\|_{\ell^p(\theta)}\zeta_{R^{1-\delta}}(\bar{x}-y)dy,
		\end{align*}
		where  $\|a_{\theta}\|_{\ell^p(\theta)}$ is denoted by  $\big(\sum\limits_{\theta}|a_\theta|^p\big)^{1/p}$.
		
        By the property \eqref{zeta}, for $z\in B_R^n(0)$, we obtain
		\begin{align*}
			&\int_{\mathbb{R}^n}\|T^\lambda_{K,\theta} f_\theta(y)\|_{\ell^p(\theta)}\zeta_{R^{1-\delta}}(\bar{x}-y)dy\\
			=&\int_{\mathbb{R}^n}\|T^\lambda_{K,\theta} f_\theta(\bar{x}+z-y)\|_{\ell^p(\theta)}\zeta_{R^{1-\delta}}(y-z)dy \\
			\lesssim &R^{\delta}\int_{\mathbb{R}^n}\|T^\lambda_{K,\theta} f_\theta(\bar{x}+z-y)\|_{\ell^p(\theta)}\zeta_{R^{1-\delta}}(y)dy\\
			\lesssim& R^{O(\delta)}\left(\int_{\mathbb{R}^n}\|T^\lambda_{K,\theta} f_\theta(\bar{x}+z-y)\|_{\ell^p(\theta)}^{p}\zeta_{R^{1-\delta}}(y)dy\right)^{1/p}.
		\end{align*}
		Then we deduces that for all $z\in B_R^n(0)$
		\beqq
		\begin{aligned}
			\quad \|T^\lambda_K f(\bar x+\cdot)\|_{L^p(B_R^n(0))}&\lesssim Q_p(\lambda, R) R^{\frac{n-1}{p^\prime}}  R^{O(\delta)}\\ &\times \left(\int_{\mathbb{R}^n}\|T^\lambda_{K,\theta} f_\theta(\bar{x}+z-y)\|_{\ell^p(\theta)}^{p}\zeta_{R^{1-\delta}}(y)dy\right)^{1/p}.
		\end{aligned}\eeqq
		Raising both sides of this estimate to the $p$th power, averaging in $z$ and summing over all balls $B_R^n$ in the covering, we conclude
		that $\|T^\lambda_K f\|_{L^p(\Box(R,R'))}$ is dominated by
		\beqq
		Q_p(\lambda,R) R^{\frac{n-1}{p^\prime}-\frac np} R^{O(\delta)}\left(\int_{\mathbb{R}^n}\sum_\theta\|T^\lambda_{K,\theta} f_\theta\|_{L^p(\Box(R,R')-y)}^{p}\zeta_{R^{1-\delta}}(y)dy\right)^{1/p}.\eeqq
		Using the trivial estimate
		\beq
		\|T^\lambda_{K,\theta} f_\theta\|_{L^\infty(\Box(R,R')-y)}\lesssim\|f_\theta\|_{L^1(B_1^{n-1}(0))}\lesssim R^{-(n-1)}\|f_\theta\|_{L^\infty(B_1^{n-1}(0))}\eeq
		and
		\beq
		\|T^\lambda_{K,\theta} f_\theta\|_{L^2(\Box(R,R')-y)}\lesssim R^{1/2}\|f_\theta\|_{L^2(B_1^{n-1}(0))},\eeq
we have
\[\|T_{K,\theta}^\lambda f_\theta\|_{L^p(\Box(R,R')-y)}\lesssim R^{-(2n-1)(\frac12-\frac1p)+\frac12}\|f_\theta\|_{L^p(B_1^{n-1}(0))}.\]
		Hence  $\|T^\lambda_Kf\|_{L^p(\Box(R,R'))}$ is dominated by $  Q_p(\lambda,R) R^{O(\delta)}\|f\|_{L^p(B_1^{n-1}(0))}.$
	\end{proof}

\begin{lemma}\label{lem6666}
Suppose that ${\rm supp\,}f\subset B^{n-1}_1(0)$.
Then the Fourier transform of $T_K^\lambda f$ is essentially supported on the $K^{-2}$-neighborhood of the surface $S:=\{(\xi,\langle M\xi,\xi\rangle):\xi\in \mathrm{supp\,}f\}$ in the sense that
\begin{align}
|\widehat{T^\lambda_K f}(\omega)|\leq{\rm RapDec}(\lambda)\|f\|_{L^p(B_1^{n-1}(0))},\quad \text{for all \,}\omega\notin \mathcal N_{CK^{-2}}S.
\end{align}
\end{lemma}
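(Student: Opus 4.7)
The plan is to prove this via a non-stationary phase argument in $x$. Starting from
\[\widehat{T_K^\lambda f}(\omega) = \int_{B_1^{n-1}(0)} f(\xi) \left( \int_{\R^n} e^{2\pi i [\phi_K^\lambda(x, \xi) - x \cdot \omega]} \mathfrak{a}^\lambda(x, \xi)\, dx \right) d\xi,\]
I would rescale $x = \lambda y$ in the inner integral to convert it into
\[\lambda^n \int_{B_1^n(0)} e^{2\pi i \lambda \Psi(y; \xi, \omega)} \mathfrak{a}(y, \xi)\, dy,\]
where $\Psi(y;\xi,\omega) := y' \cdot (\xi - \omega') + y_n(\langle M\xi, \xi\rangle - \omega_n) + E_K(y, \xi)$, so that $\lambda$ now explicitly plays the role of the large parameter.

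The $y$-gradient of $\Psi$ is
\[\nabla_y \Psi = \bigl(\xi - \omega',\ \langle M\xi, \xi\rangle - \omega_n\bigr) + \nabla_y E_K(y, \xi).\]
The first summand is the vector from $\omega$ to the point $(\xi, \langle M\xi, \xi\rangle) \in S$, so the hypothesis $\omega \notin \mathcal N_{CK^{-2}} S$ forces its magnitude to be $\geq C K^{-2}$ uniformly for $\xi \in \mathrm{supp}\, f$. The second summand is controlled by the asymptotic flatness condition \eqref{co123}, which yields $|\nabla_y E_K(y, \xi)| \leq C' K^{-2}$. Choosing $C > 2C'$ and invoking the reverse triangle inequality gives $|\nabla_y \Psi| \gtrsim K^{-2}$, hence $|\nabla_y(\lambda \Psi)| \gtrsim \lambda K^{-2}$ on the support of $\mathfrak{a}(y, \xi)$.

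I would then perform $N$ integrations by parts against the standard operator $L^* = -(2\pi i \lambda)^{-1} \nabla_y \cdot \bigl(\nabla_y \Psi / |\nabla_y \Psi|^2\,\cdot\bigr)$. Condition \eqref{co123} also bounds all higher $y$-derivatives of $\Psi$ (beyond the linear/quadratic part, only $E_K$ contributes) by $O(K^{-2})$, while \eqref{eq:c13} bounds the $y$-derivatives of $\mathfrak{a}(y, \xi)$ uniformly. Direct bookkeeping then gives $|(L^*)^N \mathfrak{a}(y, \xi)| \lesssim (K^2/\lambda)^N$, so that
\[|\widehat{T_K^\lambda f}(\omega)| \lesssim \lambda^n (K^2/\lambda)^N \|f\|_{L^1(B_1^{n-1}(0))} \lesssim \lambda^n (K^2/\lambda)^N \|f\|_{L^p(B_1^{n-1}(0))},\]
by H\"older on the compact domain. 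Since $K = K_0 R^\delta \leq K_0 \lambda^\delta$ with $\delta \ll 1$, we have $K^2/\lambda \leq K_0^2 \lambda^{2\delta - 1}$; choosing $N$ arbitrarily large (permitted by the large choice of $N_{\mathrm{ph}}, N_{\mathrm{am}}$ in Definition \ref{def00} and \eqref{eq:c13}) yields decay of any polynomial order, i.e., $\mathrm{RapDec}(\lambda)$.

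The main obstacle, I anticipate, lies in the bookkeeping of the iterated integration by parts: the lower bound $|\nabla_y \Psi| \gtrsim K^{-2}$ is itself \emph{small}, so naively dividing by $|\nabla_y \Psi|^2$ introduces factors of $K^4$, and it is not immediately obvious that one should gain (rather than lose) at each iteration. The saving grace is the matching smallness of the higher derivatives of $\Psi$, all driven by the asymptotically flat $E_K$: each derivative that falls on the numerator $\nabla_y \Psi$ contributes an extra $K^{-2}$, and each derivative acting through $|\nabla_y \Psi|^{-2}$ produces $K^{-2}\cdot K^4 = K^2$, exactly offset by the $\lambda^{-1}$ prefactor of $L^*$. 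Verifying that this cancellation holds uniformly across all terms produced by the $N$ iterations---ultimately a consequence of the self-similar scaling in the asymptotic flatness bound \eqref{co123}---is the technical heart of the argument.
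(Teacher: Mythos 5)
Your proposal is correct and follows essentially the same path as the paper's proof: rescale $x=\lambda y$ so that $\lambda$ is the large parameter, observe that $\nabla_y(\phi_K(y,\xi)-y\cdot\omega)$ has magnitude $\gtrsim K^{-2}$ when $\omega\notin\mathcal N_{CK^{-2}}S$ (which the paper phrases as $|\omega-\nabla_y\phi_K(y,\xi)|\geq CK^{-2}$), and apply non-stationary phase. You supply the bookkeeping for the iterated integration by parts that the paper leaves implicit, and your analysis of why the $K^{-2}$ smallness of the lower bound is offset by the $K^{-2}$ smallness of the higher $y$-derivatives coming from $E_K$ is the right accounting, yielding the factor $K^2/\lambda$ per iteration and hence $\mathrm{RapDec}(\lambda)$ since $K\lesssim\lambda^\delta$ with $\delta\ll 1$.
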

\begin{proof}
Define
\[G_\lambda(\xi,\omega):=\int_{\R^n}e^{2\pi i(\phi_K^\lambda(x,\xi)-x\cdot\omega)}\mathfrak a^{\lambda }(x,\xi)dx.\]
Then we have
\begin{align}\label{lem7777}\widehat {T^\lambda_Kf}(\omega)=\int_{\R^n}e^{-2\pi ix\cdot\omega}T^\lambda_Kf(x)dx=\int_{B_1^{n-1}(0)}f(\xi)G_\lambda(\xi,\omega)d\xi.\end{align}
We rewrite  as
\[G_\lambda(\xi,\omega)=\lambda^n \int_{\R^n}e^{2\pi i\lambda(\phi_K(y,\xi)-y\cdot\omega)}\mathfrak a(y,\xi)dy.\]
 From integration by parts  and the assumption \eqref{eq:c13} of $\mathfrak a$,   it follows that
\begin{align}\label{lem8888}|G_\lambda(\xi,\omega)|\leq {\rm RapDec}(\lambda),\end{align}
provided that
\begin{align}\label{lem9999}
|\omega-\nabla_y\phi_K(y,\xi)|\geq CK^{-2}.
\end{align}
Since $\phi_K$ is asymptotically flat, $\eqref{lem9999}$ holds true obviously if $\omega\notin \mathcal N_{CK^{-2}}S$.
Combining $\eqref{lem7777}$ and $\eqref{lem8888}$, we have
\[|\widehat{T^\lambda_K f}(\omega)|\leq{\rm RapDec}(\lambda)\|f\|_{L^p(B_1^{n-1}(0))},\quad \text{for all \,}\omega\notin \mathcal N_{CK^{-2}}S.\]
\end{proof}

To prove $\eqref{eq:124}$, we also need a flat decoupling estimate for $T^\lambda_K$.
	\begin{lemma}\label{flat-decoupling}(Flat decoupling)
		Let $\{\tau\}$ be a collection of finitely-overlapping $K^{-1}$-balls covering $B^{n-1}_1(0)$ with $1\leq K \leq R$,
		then we can decompose $f$ as
\[f=\sum_\tau f_\tau.\]
For $2\leq p\leq\infty$, one has
		\begin{align}\label {dec0000}\|T^\lambda_K f\|_{L^p(B_R)}\lesssim (\#\{\tau\})^{\frac12-\frac1p}\Big(\sum_{\tau}\|T^\lambda_Kf_\tau\|_{L^p(\omega_{B_R})}^2\Big)^\frac12+{\rm RapDec}(\lambda)\|f\|_{L^2(B_1^{n-1}(0))}.\end{align}
	\end{lemma}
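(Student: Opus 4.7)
The plan is to interpolate the desired bound between the endpoints $p=2$ and $p=\infty$. For $p=2$, I would invoke Lemma~\ref{lem6666}: each $\widehat{T^\lambda_K f_\tau}$ is essentially concentrated on the slab $\mathcal{N}_{CK^{-2}}(S_\tau)$ above $\tau$, and since the $\tau$'s are finitely overlapping, so are these slabs. I would then choose a weight $\omega_{B_R}$ of the form $|\rho(\cdot/R)|^{2L}$ with $\rho$ a Schwartz function whose Fourier transform is compactly supported, so that $\omega_{B_R}^{1/2}$ has Fourier support in $B^n_{C/R}(0)$. In the relevant regime $R\gtrsim K^2$, one has $R^{-1}\lesssim K^{-2}$, so multiplying $T^\lambda_K f_\tau$ by $\omega_{B_R}^{1/2}$ only slightly fattens the Fourier support while preserving finite overlap. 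Plancherel then gives
\[\|T^\lambda_K f\|_{L^2(B_R)}^2\leq \int |T^\lambda_K f|^2\omega_{B_R}\lesssim \sum_\tau \|T^\lambda_K f_\tau\|_{L^2(\omega_{B_R})}^2 + \mathrm{RapDec}(\lambda)\|f\|_{L^2(B^{n-1}_1(0))}^2,\]
the RapDec term absorbing the Fourier tails in Lemma~\ref{lem6666}.

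For $p=\infty$, Cauchy--Schwarz pointwise yields $|T^\lambda_K f(x)|^2\leq (\#\tau)\sum_\tau|T^\lambda_K f_\tau(x)|^2$, so $\|T^\lambda_K f\|_{L^\infty(B_R)}^2\leq (\#\tau)\sum_\tau\|T^\lambda_K f_\tau\|_{L^\infty(\omega_{B_R})}^2$. To interpolate for $p\in(2,\infty)$, I would apply Riesz--Thorin to the linear summation map $\Sigma:(G_\tau)_\tau\mapsto \sum_\tau G_\tau$ on the subspace of $\ell^2_\tau(L^q(\omega_{B_R}))$ consisting of sequences with $\widehat{G_\tau}$ localized to $\mathcal{N}_{CK^{-2}}(S_\tau)$: the endpoint operator norms $1$ and $(\#\tau)^{1/2}$ interpolate to $(\#\tau)^{1/2-1/p}$, producing the target constant when specialized to $G_\tau=T^\lambda_K f_\tau$. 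The RapDec tail persists through the interpolation, since it decays faster than any polynomial in $\lambda$.

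The main technical obstacle is verifying the finite-overlap property of the (slightly fattened) slabs $\mathcal{N}_{C\max(K^{-2},R^{-1})}(S_\tau)$. This uses the quadratic structure $\tfrac{1}{2}x_n\langle M\xi,\xi\rangle$ of the model phase together with the asymptotic flatness \eqref{co123} of $\mathrm{E}_K$: adjacent pieces of $S$ are separated in the normal direction on the scale $K^{-2}$, matching the slab thickness. Once this overlap count is in hand, the $L^2$ orthogonality follows from Plancherel in the standard way, and everything else is bookkeeping.
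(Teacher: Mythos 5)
Your argument matches the paper's: Lemma~\ref{lem6666} plus Plancherel (and finite overlap of the slabs $\mathcal N_{CK^{-2}}(S_\tau)$) for $p=2$, Cauchy--Schwarz for $p=\infty$, and interpolation of the summation operator in between. You are in fact more careful than the paper at the $L^2$ endpoint, where you convert the weighted local norm into a global Plancherel statement by absorbing $\omega_{B_R}^{1/2}$ (with Fourier support at scale $R^{-1}$) into the slabs --- the paper simply says ``Plancherel'' without addressing the weight or the restriction to $B_R$. One small remark: the restriction $R\gtrsim K^2$ you impose is not actually needed; even when $R^{-1}>K^{-2}$ the fattened slabs remain $O(1)$-overlapping, since adjacent caps are separated in the \emph{tangential} direction at scale $K^{-1}\geq R^{-1}$, so the hypothesis $K\leq R$ already suffices.
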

	\begin{proof}For $p=\infty$, the estimate $\eqref{dec0000}$ is trivial by H\"older's inequality. By interpolation we just need to show $\eqref {dec0000}$ for $p=2$.
		Using Lemma \ref{lem6666} for each $f_\tau$, we get
		\begin{align}\label{eta00}
			T_K^\lambda f_{\tau}={\chi}_{\mathcal N_{CK^{-2}}(S_\tau)}(D)T_K^\lambda f_\tau+{\rm RapDec}(\lambda)\|f\|_{L^2(B_1^{n-1}(0))},
		\end{align}
		where $S_\tau:=\{(\xi, \langle M\xi,\xi): \xi\in \tau\}$.
 Note that the $CK^{-2}$-neighborhoods of $S_\tau$ are finitely overlappling,
 then we complete the proof by making use of Plancherel's theorem.
	\end{proof}
	\subsection{Bilinear restriction estimate}
	Assume that $\phi$ satisfies the Carleson-Sj\"olin conditions.
	Let $U_1,U_2$ be two balls contained in $B_1^{n-1}(0)$ and $\xi_i\in U_i,i=1,2$.
	By the assumption (H1), the map
	\begin{equation*}
		\xi \mapsto \partial_{x'}\phi(x,\cdot)
	\end{equation*}
	is a diffeomorphism. Define
	\begin{equation*}
		q(x,\xi):=\partial_{x_n}\phi(x,(\partial_{x'}\phi(x,\cdot))^{-1}(\xi)),
	\end{equation*}
	i.e.
	\begin{equation}
		q(x,\partial_{x'}\phi(x,\xi))=\partial_{x_n}\phi(x,\xi).
	\end{equation}
	\begin{theorem}\cite{LS} \label{theo3}
		Let $\phi(x,\xi_i),i=1,2$ satisfy the conditions ${\rm (H1), (H2)}$. Assume that $(x,\xi_i)\in {\rm supp}\;a_i$, if $\partial_{\xi\xi}^2q$ satisfies
		\beqq
		{\rm det} \partial_{\xi\xi}^2 q(x,\partial_{x'}\phi(x,\xi_i))\neq 0, \quad \text{if}\; \xi_i\in {\rm supp}\;a_i(x,\cdot),
		\eeqq
		and
		\beq\label{eq:4}
		\big|\langle \partial_{x'\xi}^2\phi(x,\xi)\delta(x,\xi_1,\xi_2), [\partial_{x'\xi}^2\phi(x,\xi_i)]^{-1}[\partial_{\xi\xi}^2 q(x,u_i)]^{-1}\delta(x,\xi_1,\xi_2)\rangle \big|\geq c>0,
		\eeq
		for $i=1,2$, where $u_i=\partial_{x'}\phi(x,\xi_i)$ and $\delta(x,\xi_1,\xi_2)=\partial_{\xi}q(x,u_1)-\partial_\xi q(x,u_2)$,
		then
		\beq\label{eq:1}
		\big\||T^\lambda f_1T^\lambda f_2|^\frac12\big\|_{L^p(B_R^n)}\lesssim_{\phi,\varepsilon}R^\varepsilon \prod_{i=1}^2 \|f_i\|_{L^2}^\frac12,
		\eeq
		for $p \geq \frac{2(n+2)}{n}$.
	\end{theorem}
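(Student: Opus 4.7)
The plan is to deduce Theorem \ref{theo3} from Lee's flat bilinear restriction estimate (Theorem \ref{theo2}) by combining the scale-dependent normalization developed in Section \ref{pre} with an induction on scales.

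First I would apply the reduction procedure of Lemma \ref{la11}: by localizing $\xi$ to a $K^{-1}$-ball, rescaling $\xi\mapsto K^{-1}\xi$, $x'\mapsto Kx'$, $x_n\mapsto K^2 x_n$, and composing with the diffeomorphisms that normalize the Carleson--Sj\"olin data, the phase becomes the asymptotically flat model $\phi_K(x,\xi)=x'\cdot\xi+x_n\langle M\xi,\xi\rangle+{\rm E}_K(x,\xi)$ with ${\rm E}_K$ controlled by \eqref{co123}. In these coordinates the auxiliary function reads $q(x,u)=\tfrac14\langle M^{-1}u,u\rangle+O(K^{-2})$, so $\partial^2_{\xi\xi}q$ is uniformly nondegenerate, and the transversality hypothesis \eqref{eq:4} reduces, up to lower-order corrections, to the quadratic separation condition \eqref{co1} required by Theorem \ref{theo2}.

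Next I would set up an induction on the scale $R$. Let $\mathcal{B}_p(\lambda,R)$ denote the best constant in \eqref{eq:1} over all asymptotically flat $\phi_K$ satisfying the transversality. Decomposing each $U_i$ into $K^{-1}$-balls $\tau_i$ and writing $f_i=\sum_{\tau_i}f_{i,\tau_i}$, the parabolic rescaling of Lemma \ref{rescaling} converts every sub-bilinear piece at scale $R$ into a bilinear estimate at scale $R/K^2$, to which the induction hypothesis applies and gives $\mathcal{B}_p(\lambda/K^2,R/K^2)\lesssim(R/K^2)^{\varepsilon/2}$. Off-diagonal cross terms, where $\tau_1$ and $\tau_2$ are far from each other, are handled by Lemma \ref{lem6666} combined with Plancherel, which localize each $T^\lambda_K f_{i,\tau_i}$ to a $K^{-2}$-neighborhood of the rescaled surface; the remaining diagonal contributions are reassembled using the flat decoupling Lemma \ref{flat-decoupling}. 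The base case of small $R$ is handled directly by Theorem \ref{theo2}, since then the perturbation ${\rm E}_K$ is negligible on $B_R^n$.

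The main obstacle is the \emph{stability of the transversality condition} \eqref{eq:4} under the normalization and the parabolic rescaling, because these operations depend on the scale $K=K_0 R^\delta$ which itself evolves during the induction. I would verify that the left-hand side of \eqref{eq:4}, evaluated for the rescaled phase, differs from its quadratic model by $O(K^{-2})$, so the hypothesis persists through each rescaling step provided $K_0$ is chosen sufficiently large. A standard bookkeeping argument then shows that the cumulative $R^\delta$ losses incurred at each scale sum to at most $R^\varepsilon$ as long as $\delta\ll\varepsilon$, closing the induction and delivering the desired bilinear estimate in the full range $p\geq\tfrac{2(n+2)}{n}$.
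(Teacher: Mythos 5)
Theorem~\ref{theo3} is cited from Lee \cite{LS} and is \emph{not} proved in this paper; it is imported as a black-box input to the broad-narrow analysis of Section~\ref{hor}. So there is no internal proof to compare against, and your proposal should be judged as a free-standing attempt at a new argument.

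As such, the route you outline --- deducing the variable-coefficient bilinear estimate from the flat bilinear estimate (Theorem~\ref{theo2}) by the normalization of Lemma~\ref{la11} and an induction on scales --- is a genuinely different strategy from Lee's: Lee's proof in \cite{LS} runs a wave-packet/Tao-style bilinear argument directly in the variable-coefficient setting rather than bootstrapping from a pre-established flat theorem. Your proposal, however, has several gaps that prevent it from going through as stated. (i)~Lemma~\ref{la11} localizes the \emph{single} input $f$ to one $K^{-1}$-ball and then rescales; in a bilinear problem one must rescale the pair $(f_1,f_2)$ together, and after the parabolic rescaling $\xi\mapsto K^{-1}\xi$ two initially $c$-separated caps become $cK$-separated relative to the new unit ball --- so the ``sub-bilinear pieces at scale $R/K^2$'' you invoke are not in the same class as the original problem, and there is no bilinear analogue of Lemma~\ref{rescaling} or Proposition~\ref{sds} in the paper to justify the inductive step. (ii)~Your treatment of off-diagonal cross terms via ``Lemma~\ref{lem6666} combined with Plancherel'' does not work for $p>2$: Plancherel gives $L^2$ orthogonality, and Lemma~\ref{lem6666} and Lemma~\ref{flat-decoupling} are statements about the linear operator $T_K^\lambda f$, not about products $T_K^\lambda f_1\,T_K^\lambda f_2$, so they do not directly control the bilinear cross terms. (iii)~The base case cannot be ``handled directly by Theorem~\ref{theo2}'': that theorem is stated for the exact extension operator $E$ with a fixed nondegenerate Hessian phase, not for asymptotically flat perturbations $\phi_K$, and a stability-under-perturbation version of Lee's flat bilinear estimate would itself need proof. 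In short, the proposal points toward a plausible alternative programme, but each of the three steps above requires a nontrivial new lemma that neither the paper nor the proposal supplies.
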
	



	To apply Theorem \ref{theo3} to study the oscillatory operator $T_K^\lambda$,   we first introduce a notion of  \emph{strongly separated condition}.
	\begin{definition}[Strongly separated condition]
		Let $\tau_1,\tau_2$ be two balls of of dimension  $K^{-1}$. We say $\tau_1,\tau_2$ satisfy the strongly separated condition  if for each $\xi_i \in \tau_i$, the condition
		\beq \label{strong separated}
		\big|\langle \partial_{x'\xi}^2\phi(x,\xi)\delta(x,\xi_1,\xi_2), [\partial_{x'\xi}^2\phi(x,\xi_i)]^{-1}[\partial_{\xi\xi}^2 q(x,u_i)]^{-1}\delta(x,\xi_1,\xi_2)\rangle \big|\geq CK^{-1}
		\eeq holds.
	\end{definition}
	The next proposition concerns a geometric lemma  associated with the phase $\phi_K^\lambda$.
	\begin{proposition}\label{pro1}
		Let $\{\tau\}$ be a family of finitely-overlapping balls of radius $K^{-1}$. Then  we have the following two dichotomies:
		
		{\rm(I)} There exists  an $m$-dimensional affine subspace $V$ such that  every $\tau$ is contained in an $O(K^{-\frac{1}{2n}})$ neighbourhood of $V$.
		
		{\rm(II)} There are two $K^{-1}$-balls $\tau,\tau'$ which satisfy the strongly separated condition associated with $\phi_K$.
	\end{proposition}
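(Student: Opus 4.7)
My plan is to argue contrapositively: assuming that no pair of $K^{-1}$-balls satisfies the strongly separated condition \eqref{strong separated}, I will construct an $m$-dimensional affine subspace $V$ such that every $\tau$ lies in its $O(K^{-1/(2n)})$-neighborhood. The first step is to reduce \eqref{strong separated} to a clean statement about the quadratic form $Q(v):=\langle Mv,v\rangle$. For the flat model $\phi_0(x,\xi):=x'\cdot\xi+x_n\langle M\xi,\xi\rangle$, direct computation yields $\partial^2_{x'\xi}\phi_0=I_{n-1}$, $q_0(x,u)=\langle Mu,u\rangle$, $\partial_\xi q_0=2Mu$, $\partial^2_{\xi\xi}q_0=2M$ and $\delta(x,\xi_1,\xi_2)=2M(\xi_1-\xi_2)$, so the quantity in \eqref{strong separated} equals $2|Q(\xi_1-\xi_2)|$. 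Since $\phi_K=\phi_0+E_K$ with all relevant derivatives of $E_K$ of size $O(K^{-2})$ by \eqref{co123}, every $E_K$-contribution to \eqref{strong separated} is $O(K^{-2})$, which is negligible against the threshold $CK^{-1}$. Thus after adjusting constants, failure of strong separation for $\phi_K$ at centers $\xi_\tau,\xi_{\tau'}$ is equivalent to $|Q(\xi_\tau-\xi_{\tau'})|<cK^{-1}$.

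Fix a ball $\tau_0$, translate to assume $\xi_{\tau_0}=0$, and write $v_\tau:=\xi_\tau-\xi_{\tau_0}$. Setting $v_{\tau'}=0$ gives $|Q(v_\tau)|<cK^{-1}$ for every $\tau$, and combining this with $|Q(v_\tau-v_{\tau'})|<cK^{-1}$ through the polarization identity $2B(v,w)=Q(v)+Q(w)-Q(v-w)$ (where $B$ is the bilinear form of $M$) yields $|B(v_\tau,v_{\tau'})|\lesssim K^{-1}$ for every pair. I now run a greedy selection: starting from $v_{\tau_0}=0$, successively choose $v_{\tau_j}$ at distance greater than $K^{-1/(2n)}$ from the affine span $V_{j-1}$ of $v_{\tau_0},\ldots,v_{\tau_{j-1}}$, stopping at the first stage $d$ where no such $v_{\tau_j}$ remains. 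At termination every remaining $\tau$ lies within $K^{-1/(2n)}$ of $V_d$ by construction, so conclusion (I) follows once I establish $d\le m$.

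The core remaining task is therefore the linear-algebra estimate $d\le m$. Normalize $\hat v_j:=v_{\tau_j}/|v_{\tau_j}|$ for $j\ge 1$; since $|v_{\tau_j}|\ge K^{-1/(2n)}$ and $|Q(v_{\tau_j})|,\,|B(v_{\tau_j},v_{\tau_k})|\lesssim K^{-1}$, the Gram matrix $B(\hat v_j,\hat v_k)$ has entries bounded by $K^{-1+1/n}$. Moreover greedy separation at scale $K^{-1/(2n)}$ ensures that the $\hat v_j$'s are robustly linearly independent, so $W:=\mathrm{span}\{\hat v_1,\ldots,\hat v_d\}$ is exactly $d$-dimensional. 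Since $M$ has signature $(m,n-1-m)$ with $m\le\lfloor(n-1)/2\rfloor$, every subspace on which $M$ vanishes identically has dimension at most $m$ (the Witt index of $M$); a Weyl-type eigenvalue perturbation argument promotes this to the quantitative statement that a $d$-dimensional subspace whose $M$-Gram matrix has operator norm $O(K^{-1+1/n})$ must satisfy $d\le m$ once $K$ is large. This forces the greedy process to terminate by stage $m$ and produces the desired $V$.

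The main obstacle is calibrating the exponent $1/(2n)$: it must be small enough that Gram-matrix errors of size $K^{-1+1/n}$ are absorbed by the quantitative Witt-index bound, yet large enough that the final $K^{-1/(2n)}$-fattening of the chosen affine subspace captures every remaining center. A secondary subtlety is ensuring that the reduction to the quadratic phase comes with constants uniform in the ambient scale, since Proposition \ref{pro1} will be applied inside a scale-dependent induction where $K=K_0R^\delta$ varies.
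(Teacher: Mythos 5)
Your reduction to the flat quadratic model is exactly the paper's argument: the paper observes that for $\phi_K=\phi_0+{\rm E}_K$ the bilinear quantity in \eqref{strong separated} differs from the flat value $2|\langle M(\xi_1-\xi_2),\xi_1-\xi_2\rangle|$ only by $O(K^{-2})$ errors, which are negligible against the threshold $CK^{-1}$, and then simply cites Barron \cite{Bar} for the flat-model dichotomy. The paper offers no further argument; your sketch of the flat-model proof (polarization, greedy selection against a growing affine span, and a quantitative Witt-index bound) reproduces the content of Barron's argument that the paper outsources, so in that respect you have filled in more detail than the paper itself supplies. One step needs tightening: the bound $|B(\hat v_j,\hat v_k)|\lesssim K^{-1+1/n}$ controls the Gram matrix of $B$ in the \emph{non-orthonormal} frame $\{\hat v_j\}$, and this does not by itself control $\|M|_W\|$, so it cannot be fed directly into a compactness/Witt-index bound as you state. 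One must pass to a Gram--Schmidt orthonormal frame $\{\hat e_j\}$ of $W$; the lower-triangular change-of-basis matrix $A$ has $a_{11}=1$, $|a_{jk}|\le 1$ and $a_{jj}\gtrsim K^{-1/(2n)}$, giving $\|A^{-1}\|\lesssim_n K^{(d-1)/(2n)}$, and conjugating yields $\|(B(\hat e_j,\hat e_k))\|\lesssim K^{-1+d/n}$. Since the greedy process only needs to be stopped at $d=m+1\le (n+1)/2$, this exponent is at most $-1/2+1/(2n)<0$, so the conclusion $d\le m$ still follows for $K$ large; the point is that the exponent which actually has to beat the Witt bound is $K^{-1+d/n}$, not the $K^{-1+1/n}$ you wrote, and the calibration $\alpha=1/(2n)$ is exactly what makes the lost factor $\|A^{-1}\|^2$ affordable.
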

	 Barron \cite{Bar} proved the above proposition for the standard phase  $\phi(x,\xi)=x'\cdot \xi+x_n\langle M\xi,\xi\rangle$. Note that
\[\phi_{K}(x,\xi)=x'\cdot\xi+x_n\langle M\xi,\xi\rangle+{\rm E}_K(x,\xi)\]
 can be viewed as a small perturbation of the standard case, and the perturbation is  sufficiently small comparing to $K^{-1}$, thus the strongly separated condition under the phase $\phi_K$ can be essentially  identified the same as the standard phase $x\cdot \xi+\langle M\xi,\xi\rangle $.
	\subsection{Broad-Narrow analysis.}
	Let $\delta=\varepsilon^2\ll1$, and set
	\begin{align}\label{relation}
		K_2=K_1^{2\delta},\, K_1=K^\alpha,\, K=K_0R^\delta,
	\end{align}
	where $\alpha=\frac1{2n}$.
	Let $\mathfrak T$ be a collection of finitely-overlapping $K^{-1}$-balls $\tau$ covering $\mathrm{supp\,} f$, and we can fix a collection $\mathscr Q$ of finitely-overlapping $K^2$-cubes that cover $B^{n}_R(0)$.  For each $Q\in\mathscr Q$, we define its \emph{significant set}
	\[\mathcal S_p(Q):=\Big\{\tau\in\mathfrak T:\|T^\lambda_K f_{\tau}\|_{L^p(Q)}\geq\frac1{100\#\mathfrak T}\|T^\lambda_K f\|_{L^p(Q)}\Big\}.\]
	We say that a $K^2$-cube $Q\in\mathscr Q$ is \emph{narrow} and write $Q\in\mathscr N$
	if and only if there exists an $m$-dimensional linear subspace $V\subset\R^{n}$
	such that
	\begin{align}\angle(G^\lambda(x,\tau),V)\leq CK^{-1}_1\end{align}
	for all $\tau\in \mathcal S_p(Q)$, here  for given $x\in B_R^n(0)$,  $G^\lambda(x,\tau)$ denotes the set of the unit normal vectors of the hypersurface $\{\partial_x\phi^\lambda_K(x ,\eta):\eta \in\tau\}$.
	If a $K^2$-cube $Q\in\mathscr Q$ is not narrow,  then we call it  \emph{broad} and write $Q\in\mathscr B$. Thus
	\[\|T^\lambda_K f\|_{L^p(B_R^n )}^p\leq \sum_{Q\in\mathscr B}\|T^\lambda_K f\|_{L^p(Q)}^p+\sum_{Q\in\mathscr N}\|T^\lambda_K f\|_{L^p(Q)}^p.
	\]
	We call it broad case if
	\[\|T^\lambda_K f\|_{L^p(B_R)}^p\leq 2\sum_{Q\in\mathscr B}\|T^\lambda_K f\|_{L^p(Q)}^p,
	\]
	otherwise narrow case if
	\[\|T^\lambda_K f\|_{L^p(B_R)}^p\leq 2\sum_{Q\in\mathscr N}\|T^\lambda _Kf\|_{L^p(Q)}^p.\]
	Now we are going to prove \eqref{eq:124}. Obviously \eqref{eq:124} holds true for $1\leq\lambda\leq 1000$, so let us suppose \eqref{eq:124} holds true for $1\leq r\leq\lambda'\leq\lambda/2$.
	In the following part, we will deal with the broad  and narrow cases respectively. Then we balance the two cases and close  the whole induction for \eqref{eq:124}.

	\subsection{Narrow estimate}
	Suppose $Q\in\mathscr Q$ is a narrow cube, by Proposition \ref{pro1},
	there exists an $m$-dimensional affine subspace $V\subset \R^{n-1}$ such that
	\[\bigcup_{\tau\in S_p(Q)}\tau\subset N_{CK_1^{-1}}V.\]
	We decompose $B^{n-1}_1(0)$ into $K_1^{-1}$-balls $\{\pi\}$. Let $\Pi_V$ be a minimal collection of $\{\pi\}$ covering $B^{n-1}_1(0)\cap N_{CK_1^{-1}}V$ and $\mathfrak I$ be a collection of finitely-overlapping $K_1^{-1}$-balls $\{\pi\}$ covering $\mathrm{supp\,} f$. Note that $\Pi_V$ contains $CK_1^m$ many balls $\pi$. Using  H\"older's inequality and Lemma \ref{flat-decoupling}, we obtain
	\begin{align*}
		\|T^\lambda_K f\|_{L^p(Q)}
		&\leq CK_1^{m(\frac12-\frac1p)}\Big(\sum_{\pi \in\Pi_V}\|T_K^\lambda f_\pi\|_{L^p(\omega_Q)}^2\Big)^\frac12\\
		&\leq CK_1^{2m(\frac12-\frac1p)}\Big(\sum_{\pi \in\Pi_V}\|T^\lambda_K f_\pi \|_{L^p(\omega_Q)}^p\Big)^\frac1p\\
		&\leq CK_1^{2m(\frac12-\frac1p)}\Big(\sum_{\pi \in\mathfrak I}\|T^\lambda_K f_\pi \|_{L^p(\omega_Q)}^p\Big)^\frac1p.
	\end{align*}
	By  Lemma \ref{rescaling} and our induction assumption, we have
	\begin{align*}
		\Big(\sum_{Q\in\mathscr N}\|T_K^\lambda f\|_{L^p(Q)}^p\Big)^\frac1p
		&\leq CK_1^{2m(\frac12-\frac1p)}\Big(\sum_{\pi\in\mathfrak I }\|T_K^\lambda f_\pi \|_{L^p(\omega_{B_R})}^p\Big)^\frac1p\\
		&\leq \bar{C}C_\varepsilon R^\varepsilon K_1^{-\varepsilon}K_1^{2m(\frac12-\frac1p)-(n-1)+\frac{2n}p}\Big(\sum_{\pi\in\mathfrak I }\|f_\pi \|_{L^p(B_1^{n-1}(0))}^p\Big)^\frac1p\\
		&\leq \bar{C}C_\varepsilon R^\varepsilon K_1^{-\varepsilon}K_1^{2m(\frac12-\frac1p)-(n-1)+\frac{2n}p}\|f\|_{L^p(B_1^{n-1}(0))},
	\end{align*}
where $\bar{C}$ is a large constant.

	 If  $p\geq\frac{2(n-m)}{n-m-1}$, we obtain
	\beq\label{narrow result}
	\Big(\sum_{Q\in\mathscr N}\|T_K^\lambda f\|_{L^p(Q)}^p\Big)^\frac1p
	\leq C_\varepsilon R^\varepsilon \|f\|_{L^p(B_1^{n-1}(0))}.
	\eeq
	\subsection{Broad estimate}
	We will show the broad estimate by making use of the bilinear arguments.
	
	\begin{proposition}[Broad estimate]\label{broad result}
		Let $p\geq\frac{2(n+2)}{n}$. We have
		\begin{align}\label{broad result0}
			\sum_{Q\in\mathscr B}\|T_K^\lambda f\|_{L^p(Q)}^p\leq CK^{O(1)}\|f\|_{L^2(B^{n-1}_1(0))}^p.
		\end{align}
	\end{proposition}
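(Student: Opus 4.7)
\textbf{Proof plan for Proposition \ref{broad result}.}
The plan is to reduce the broad sum to a single bilinear $L^p$--expression for $T_K^\lambda$ evaluated on two caps satisfying the strongly separated condition, and then invoke Theorem \ref{theo3}. For each $Q\in\mathscr B$, by the very definition of broadness no $m$-plane $V$ can capture all the directions $G^\lambda(x,\tau)$, $\tau\in\mathcal S_p(Q)$, within angular scale $CK_1^{-1}=CK^{-\alpha}=CK^{-1/(2n)}$. Applying Proposition \ref{pro1} to the subcollection $\mathcal S_p(Q)$ thus rules out case (I), so case (II) must hold: there exist $\tau_1(Q),\tau_2(Q)\in\mathcal S_p(Q)$ verifying the strongly separated condition \eqref{strong separated} for the phase $\phi_K$.

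The second step converts this to a bilinear bound on each broad cube. The defining property of $\mathcal S_p(Q)$ gives, for $i=1,2$,
\[
\|T_K^\lambda f\|_{L^p(Q)}\leq 100\,(\#\mathfrak T)\,\|T_K^\lambda f_{\tau_i(Q)}\|_{L^p(Q)}\lesssim K^{O(1)}\,\|T_K^\lambda f_{\tau_i(Q)}\|_{L^p(Q)},
\]
so the geometric mean of the two inequalities ($i=1,2$) yields
\[
\|T_K^\lambda f\|_{L^p(Q)}^p\lesssim K^{O(1)}\bigl\||T_K^\lambda f_{\tau_1(Q)}\,T_K^\lambda f_{\tau_2(Q)}|^{1/2}\bigr\|_{L^p(Q)}^p.
\]
Since the number of ordered pairs of caps in $\mathfrak T$ is $\lesssim K^{2(n-1)}=K^{O(1)}$, a pigeonhole argument selects a single pair $(\tau_1^\ast,\tau_2^\ast)$ with strong separation such that, writing $\mathscr B'\subset\mathscr B$ for the subfamily where $(\tau_1(Q),\tau_2(Q))=(\tau_1^\ast,\tau_2^\ast)$,
\[
\sum_{Q\in\mathscr B}\|T_K^\lambda f\|_{L^p(Q)}^p\lesssim K^{O(1)}\sum_{Q\in\mathscr B'}\bigl\||T_K^\lambda f_{\tau_1^\ast}T_K^\lambda f_{\tau_2^\ast}|^{1/2}\bigr\|_{L^p(Q)}^p\lesssim K^{O(1)}\bigl\||T_K^\lambda f_{\tau_1^\ast}T_K^\lambda f_{\tau_2^\ast}|^{1/2}\bigr\|_{L^p(B_R^n)}^p,
\]
by the finite overlap of the cubes in $\mathscr Q$.

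The third step applies Theorem \ref{theo3} at scale $R$ to the strongly separated pair $(\tau_1^\ast,\tau_2^\ast)$, which is permissible for $p\geq \frac{2(n+2)}{n}$ and yields
\[
\bigl\||T_K^\lambda f_{\tau_1^\ast}T_K^\lambda f_{\tau_2^\ast}|^{1/2}\bigr\|_{L^p(B_R^n)}\lesssim_\varepsilon R^\varepsilon\,\|f_{\tau_1^\ast}\|_2^{1/2}\|f_{\tau_2^\ast}\|_2^{1/2}\leq R^\varepsilon\|f\|_{L^2(B_1^{n-1}(0))}.
\]
Absorbing the factor $R^\varepsilon$ into the constant $C$ in \eqref{broad result0} yields the claim.

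The main obstacle is the verification that Theorem \ref{theo3} actually applies to the perturbed phase $\phi_K^\lambda$ with a transversality constant $c>0$ that is uniform in $(\xi_1,\xi_2)\in\tau_1^\ast\times\tau_2^\ast$ and independent of $K$. Both the nondegeneracy $\det\partial_{\xi\xi}^2 q\neq 0$ and the quantitative bound \eqref{eq:4} must be transferred from the model phase $x'\cdot\xi+x_n\langle M\xi,\xi\rangle$ to $\phi_K$. Since by \eqref{co123} the error ${\rm E}_K$ and all its derivatives are $O(K^{-2})$, while the strongly separated condition \eqref{strong separated} is quantified at the larger scale $K^{-1}$, the perturbation is negligible compared to the transversality scale; after the standard parabolic rescaling sending each $K^{-1}$-cap to the unit ball, the model phase produces the desired uniform constant and the rescaled error can be absorbed into the smooth amplitude satisfying \eqref{eq:c13}. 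This quantitative comparison, already anticipated by the remark following Proposition \ref{pro1}, is the crucial ingredient that turns the bilinear input of Theorem \ref{theo3} into the broad estimate \eqref{broad result0}.
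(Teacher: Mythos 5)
Your plan correctly identifies the three ingredients (Proposition \ref{pro1} to find separated caps, pigeonholing, the bilinear Theorem \ref{theo3}) but there is a genuine gap in the second step. After taking the geometric mean you obtain
\[
\|T_K^\lambda f\|_{L^p(Q)}\lesssim K^{O(1)}\big(\|T_K^\lambda f_{\tau_1(Q)}\|_{L^p(Q)}\,\|T_K^\lambda f_{\tau_2(Q)}\|_{L^p(Q)}\big)^{1/2},
\]
a product of separate $L^p(Q)$-norms; you then silently replace this by the bilinear quantity $\bigl\||T_K^\lambda f_{\tau_1(Q)}T_K^\lambda f_{\tau_2(Q)}|^{1/2}\bigr\|_{L^p(Q)}$. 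But Cauchy--Schwarz gives the \emph{opposite} inequality,
\[
\bigl\||T_K^\lambda f_{\tau_1}T_K^\lambda f_{\tau_2}|^{1/2}\bigr\|_{L^p(Q)}
=\Big(\int_Q|T_K^\lambda f_{\tau_1}|^{p/2}|T_K^\lambda f_{\tau_2}|^{p/2}\Big)^{1/p}
\leq\big(\|T_K^\lambda f_{\tau_1}\|_{L^p(Q)}\,\|T_K^\lambda f_{\tau_2}\|_{L^p(Q)}\big)^{1/2},
\]
so you cannot pass from the product of norms to the bilinear norm for free. Filling this requires a reverse-H\"older mechanism on the $K^2$-cube $Q$, and that is precisely the content of Lemma \ref{lemmaoo} in the paper: after the modulation $e^{-2\pi i\phi_K^\lambda(\cdot,\xi_{\tau_i})}$ the piece $T_K^\lambda f_{\tau_i}$ is (up to ${\rm RapDec}(\lambda)$) reproduced by convolution with $\psi_{K/C}$, because $f_{\tau_i}$ has support of diameter $K^{-1}$. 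The paper then exploits this in Lemma \ref{lem:broad-2}: it controls $\|\cdot\|_{L^\infty(Q)}\|\cdot\|_{L^\infty(Q)}$ by a smeared triple integral over a unit-volume sub-cube $I_Q\subset Q$, sums these over $Q\in\mathscr B$, and only then obtains a genuine bilinear $L^{p}$-expression on $B_R^n$ to which Theorem \ref{theo3} applies. Without this step your reduction to a single bilinear norm does not go through, and the proof as written does not close.

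Two smaller remarks. Your pigeonhole to a single separated pair $(\tau_1^\ast,\tau_2^\ast)$ is a perfectly acceptable variant of the paper's sum over all separated pairs; both cost a harmless $K^{O(1)}$. When you invoke Theorem \ref{theo3}, the resulting factor $R^\varepsilon$ must be absorbed into $K^{O(1)}$: this works because $K=K_0R^{\delta}$ with $\delta=\varepsilon^2$, so one should apply the bilinear theorem with its free exponent chosen $\leq\delta$ (giving $R^\delta\lesssim K$); stating this choice explicitly would tighten the argument. Your discussion of why the strongly separated condition for $\phi_K$ yields the uniform transversality constant needed in Theorem \ref{theo3} is correct and in the spirit of the remark after Proposition \ref{pro1}.
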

	To prove Proposition \ref{broad result}, we naturally need to obtain  the bounds of $\|T^\lambda_Kf\|_{L^p(Q)}^p$ for each $Q$ firstly, and then sum them together.
	For this purpose we first present two lemmas.
	\begin{lemma}\label{lem:broad-1}
		For any $Q\in \mathscr B$, there are two $K^{-1}$-balls $\tau_1,\tau_2\in \mathcal S_p(Q)$ satisfying the strongly separated condition \eqref{strong separated} such that
		\begin{align}\|T_K^\lambda f\|_{L^p(Q)}\leq CK^{O(1)}\|T_K^\lambda f_{\tau_1}\|_{L^p(Q)}^\frac12\|T_K^\lambda f_{\tau_2}\|_{L^p(Q)}^\frac12.\end{align}
	\end{lemma}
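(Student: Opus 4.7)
My plan is to produce the two strongly separated caps by applying the geometric dichotomy of Proposition \ref{pro1} to the family $\mathcal S_p(Q)$, and then to combine this with the defining inequality of the significant set via an AM--GM step.

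First I would restrict the sum to the significant caps. Writing $f=\sum_\tau f_\tau$ and applying the triangle inequality, the contribution of caps $\tau\notin\mathcal S_p(Q)$ is bounded by $\tfrac{1}{100}\|T^\lambda_K f\|_{L^p(Q)}$ by the very definition of $\mathcal S_p(Q)$, and can be absorbed on the left to give
\[
\|T^\lambda_K f\|_{L^p(Q)}\lesssim\sum_{\tau\in\mathcal S_p(Q)}\|T^\lambda_K f_\tau\|_{L^p(Q)}.
\]
In particular $\mathcal S_p(Q)$ is nonempty.

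Next I would apply Proposition \ref{pro1} to the subfamily $\mathcal S_p(Q)$ at scale $K_1^{-1}=K^{-1/(2n)}$. The proposition offers two alternatives. Alternative (I) places every cap in $\mathcal S_p(Q)$ inside an $O(K_1^{-1})$-neighbourhood of some $m$-dimensional affine subspace $V'\subset\R^{n-1}$. Because $\phi_K$ is asymptotically flat, the Gauss map $\eta\mapsto\nabla_x\phi_K^\lambda(x,\eta)/|\nabla_x\phi_K^\lambda(x,\eta)|$ differs from its model counterpart $\eta\mapsto(-2M\eta,1)/\sqrt{1+4|M\eta|^2}$ by an error of size $O(K^{-2})\ll K_1^{-1}$, so Alternative (I) forces $G^\lambda(x,\tau)$ to lie in a $CK_1^{-1}$-neighbourhood of an $m$-dimensional linear subspace $V\subset\R^n$ for every $\tau\in\mathcal S_p(Q)$, contradicting the assumption $Q\in\mathscr B$. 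Hence Alternative (II) must hold, producing two caps $\tau_1,\tau_2\in\mathcal S_p(Q)$ satisfying \eqref{strong separated}. Finally, since $\tau_1,\tau_2\in\mathcal S_p(Q)$, the defining inequality of the significant set yields $\|T^\lambda_K f\|_{L^p(Q)}\leq 100\,\#\mathfrak T\,\|T^\lambda_K f_{\tau_i}\|_{L^p(Q)}$ for $i=1,2$; multiplying these two inequalities and taking the square root produces the desired bound, with $\#\mathfrak T\lesssim K^{n-1}=K^{O(1)}$.

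The main obstacle I foresee is the geometric translation in the middle step: Proposition \ref{pro1} is phrased in terms of the caps $\tau$ lying near an affine subspace of $\R^{n-1}$, whereas the narrow set $\mathscr N$ is defined via the angle between the Gauss map images $G^\lambda(x,\tau)\subset\R^n$ and a linear subspace of $\R^n$. Making this identification quantitative, and in particular checking that the perturbation term $\mathrm E_K$ in $\phi_K$ does not degrade the $K_1^{-1}$ scale uniformly in $x\in Q$, is exactly where the asymptotic flatness hypothesis \eqref{co123} must be invoked; the choice $K_1=K^{1/(2n)}$ is comfortable precisely because the error scale $K^{-2}$ is much smaller than $K_1^{-1}$.
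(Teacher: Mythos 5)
Your proposal follows essentially the same route as the paper's own proof: apply Proposition \ref{pro1} to $\mathcal S_p(Q)$, rule out alternative (I) because it would make $Q$ narrow, extract the strongly separated pair $\tau_1,\tau_2$ from alternative (II), and close with the AM--GM/significance-set step giving the factor $100\#\mathfrak T\lesssim K^{O(1)}$. The extra remarks you add (the triangle-inequality absorption showing $\mathcal S_p(Q)\neq\emptyset$, and the quantitative comparison of the Gauss map for $\phi_K$ with that of the model phase using the $K^{-2}\ll K_1^{-1}$ gap) are correct clarifications of points the paper leaves implicit, not a genuinely different argument.
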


	\begin{proof} Let $Q\in\mathscr B$, then $\#\mathcal S_p(Q)\geq2$. Suppose that there doesn't exist two $K^{-1}$-balls $ \tau_1, \tau_2\in \mathcal S_p(Q)$ satisfying the strongly separated condition \eqref{strong separated}.
		Applying the Proposition \ref{pro1} to $\mathcal S_p(Q)$, we get
		\[ \tau\subset N_{CK_1^{-1}}V,\quad \text{for all\ }  \tau\in \mathcal S_p(Q)\]
		for some $m$-dimensional affine subspace $V$. This force all $G^\lambda(x,\tau)$ to be in the neighborhood $\mathcal N_{CK_1^{-1}}W$ of some $m$ dimensional  subspace $W$. Thus
		$Q$ is a narrow cube, which contradicts with our assumption.  Thus, we can find $ \tau_1, \tau_2\in\mathcal S_p(Q)$ satisfying the strongly separated condition \eqref{strong separated} such that
		\begin{align}
			\|T_K^\lambda f\|_{L^p(Q)}
			&\leq(100\#\mathfrak T)\|T_K^\lambda f_{\tau_1}\|_{L^p(Q)}^\frac12\|T_K^\lambda f_{\tau_2}\|_{L^p(Q)}^\frac12 \nonumber\\
			&\leq CK^{O(1)}\|T_K^\lambda f_{\tau_1}\|_{L^p(Q)}^\frac12\|T_K^\lambda f_{\tau_2}\|_{L^p(Q)}^\frac12.
		\end{align}
	\end{proof}

\begin{lemma}\label{lemmaoo}
Suppose that $f\in L^2(\R^{n-1})$ with support ${\rm supp\,}f\subset B^{n-1}_{K^{-1}}(\bar\xi)\subset B^{n-1}_1(0)$, then we have
\begin{align}\label{lemmaoooo}
|T_K^\lambda f(x)|=|(e^{-2\pi i\phi_K^\lambda(\cdot,\bar\xi)}T_K^\lambda f)\ast \psi_{K/C}(x)|+{\rm RapDec}(\lambda)\|f\|_{L^2(\R^{n-1})}
\end{align}
where $\psi_{K/C}(x)=C^nK^{-n}\psi(CK^{-1}x)$ with ${\rm supp\,}\hat\psi \subset B_2^n(0)$ and $\hat\psi=1$ on $B_1^n(0)$.
\end{lemma}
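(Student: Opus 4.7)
The plan is to show that after modulating $T_K^\lambda f$ by $e^{-2\pi i\phi_K^\lambda(x,\bar\xi)}$, the resulting function $g(x) := e^{-2\pi i\phi_K^\lambda(x,\bar\xi)}T_K^\lambda f(x)$ has its Fourier transform essentially concentrated in a ball of radius $\sim C/K$ about the origin, up to a rapidly decreasing error in $\lambda$. Since $|g(x)| = |T_K^\lambda f(x)|$ and $\widehat{\psi_{K/C}}(\omega) = \hat\psi(K\omega/C)$ is identically $1$ on $|\omega| \leq C/K$, this spectral concentration is exactly what allows one to replace $g$ by $g \ast \psi_{K/C}$ modulo a RapDec term.

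Computing the Fourier transform gives
\[\hat g(\omega) = \int_{B^{n-1}_{K^{-1}}(\bar\xi)} f(\xi)\,H_\lambda(\xi,\omega)\,d\xi,\qquad H_\lambda(\xi,\omega) := \int_{\R^n} e^{2\pi i[\phi_K^\lambda(x,\xi) - \phi_K^\lambda(x,\bar\xi) - x\cdot\omega]}\mathfrak a^\lambda(x,\xi)\,dx.\]
The main step is to estimate $H_\lambda$ via non-stationary phase for $|\omega| \geq C_0/K$ with $C_0$ sufficiently large. Using that $\phi_K(y,\xi) = y'\cdot\xi + y_n\langle M\xi,\xi\rangle + {\rm E}_K(y,\xi)$ with ${\rm E}_K$ satisfying \eqref{co123}, the mean value theorem gives
\[|\nabla_x\phi_K^\lambda(x,\xi) - \nabla_x\phi_K^\lambda(x,\bar\xi)| \leq \|\partial^2_{x\xi}\phi_K\|_{L^\infty}\,|\xi-\bar\xi| \leq CK^{-1}\]
for $\xi \in \mathrm{supp}\,f \subset B^{n-1}_{K^{-1}}(\bar\xi)$. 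Consequently the gradient of the full phase in $x$ is bounded below by $\tfrac12\max(|\omega|, C_0/K)$ once $C_0$ is taken large.

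After rescaling $y = x/\lambda$ to reveal the true oscillation scale, one has $H_\lambda(\xi,\omega) = \lambda^n\!\int e^{2\pi i\lambda\Psi(y)}\mathfrak a(y,\xi)\,dy$ with $|\nabla_y\Psi(y)| \gtrsim \max(|\omega|, K^{-1})$ on $\mathrm{supp}\,\mathfrak a(\cdot,\xi)$, and with $\Psi$ having higher derivatives bounded uniformly in $\lambda$. Standard non-stationary phase then yields, for every $N\in\mathbb N$,
\[|H_\lambda(\xi,\omega)| \leq C_N\,\lambda^{n-N}\max(|\omega|, K^{-1})^{-N}.\]
Since $K \leq K_0\lambda^\delta$ with $\delta = \varepsilon^2 \ll 1$, for $C_0/K \leq |\omega| \leq 1$ this becomes $C_N K_0^N\lambda^{n - (1-\delta)N}$, while for $|\omega| \geq 1$ one retains an additional $|\omega|^{-N}$ decay on top of $\lambda^{n-N}$.

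To conclude, by Fourier inversion one writes
\[g(x) - g\ast\psi_{K/C}(x) = \int_{|\omega| \geq C_0/K}\hat g(\omega)\bigl(1-\hat\psi(K\omega/C_0)\bigr)e^{2\pi ix\cdot\omega}\,d\omega,\]
and integrating the pointwise bounds on $|H_\lambda|$ over $\omega$ (splitting at $|\omega|=1$) together with $\|f\|_{L^1(B^{n-1}_{K^{-1}}(\bar\xi))} \lesssim K^{-(n-1)/2}\|f\|_{L^2}$ gives $|g(x) - g\ast\psi_{K/C}(x)| \leq \mathrm{RapDec}(\lambda)\|f\|_{L^2(\R^{n-1})}$ uniformly in $x$. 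Passing to absolute values finishes the proof. The main obstacle is the non-stationary phase estimate in the regime $|\omega| \sim K^{-1}$, where each integration by parts only gains a factor of $K/\lambda$ rather than $1/\lambda$; rapid decay in $\lambda$ is recovered precisely because $K \leq K_0\lambda^\delta$ with $\delta \ll 1$, which is the point of the scale hierarchy built into the parameter setup of Lemma~\ref{la11}.
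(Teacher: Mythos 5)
Your argument is correct and follows essentially the same route as the paper's: modulate by $e^{-2\pi i\phi_K^\lambda(\cdot,\bar\xi)}$, express the Fourier transform of the modulated function as $\int f(\xi)F_\lambda(\xi,\omega)\,d\xi$ with $F_\lambda$ a rescaled oscillatory integral, observe that for $|\omega|\gtrsim K^{-1}$ the $x$-gradient of the phase is bounded below by $K^{-1}$ on the amplitude support, and apply non-stationary phase so that the low-pass filter $\psi_{K/C}$ reproduces the function up to a ${\rm RapDec}(\lambda)$ error. The only difference is one of exposition: you make explicit that each integration by parts gains only a factor $K/\lambda\leq K_0\lambda^{\delta-1}$ and that rapid decay still follows from the scale hierarchy $K=K_0R^\delta\leq K_0\lambda^\delta$, a point the paper states only implicitly through the displayed lower bound $|\nabla_x\phi_K(x,\xi)-\nabla_x\phi_K(x,\bar\xi)-\omega|\geq K^{-1}$ before invoking integration by parts.
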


\begin{proof}We observe that
\begin{align*}
\mathcal F\big(e^{-2\pi i\phi_K^\lambda(\cdot,\bar\xi)}T^\lambda_Kf(\cdot)\big)(\omega)
&=\int_{\R^n}\int_{B_1^{n-1}(0)}e^{2\pi i(\phi_K^\lambda(x,\xi)-\phi_K^\lambda(x,\bar\xi)-x\cdot\omega)}\mathfrak a^\lambda(x,\xi)f(\xi)d\xi dx \\
&=\int_{B_1^{n-1}(0)}F_\lambda(\xi,\omega)f(\xi)d\xi,
\end{align*}
where
\[F_\lambda(\xi,\omega):=\int_{\R^n}e^{2\pi i(\phi_K^\lambda(x,\xi)-\phi_K^\lambda(x,\bar\xi)-x\cdot\omega)}\mathfrak a^\lambda(x,\xi)dx.\]
We can rewrite as
\[F_\lambda(\xi,\omega):=\lambda^n\int_{\R^n}e^{2\pi i\lambda(\phi_K(x,\xi)-\phi_K(x,\bar\xi)-x\cdot\omega)}\mathfrak a(x,\xi)dx.\]
For $|\omega|\geq CK^{-1}$ and $x\in B_1^n(0)$, we have $$|\nabla_x\phi_K(x,\xi)-\nabla_x\phi_K(x,\bar\xi)-\omega|\geq K^{-1}.$$ Integration by parts, it follows  that
\begin{align}|F_\lambda(\xi,\omega)|\leq {\rm RapDec}(\lambda)(1+|\omega|)^{-(n+1)}. \end{align}
Thus we have
\[\mathcal F\Big(e^{-2\pi i\phi_K^\lambda(\cdot,\bar\xi)}T^\lambda_K f(\cdot)\Big)(\omega)=\widehat{\psi_{K/C}}(\omega)\mathcal F\Big(e^{-2\pi i\phi_K^\lambda(\cdot,\bar\xi)}T^\lambda_K f(\cdot)\Big)(\omega)+U(f,\lambda)(\omega),\]
where $|U(f,\lambda)(\omega)|\leq {\rm RapDec}(\lambda)(1+|\omega|)^{-(n+1)}\|f\|_{L^2(B_1^{n-1}(0))}$. By making use of Fourier inversion we obtain
 \[e^{-2\pi i\phi_K^\lambda(x,\bar\xi)}T^\lambda_Kf(x)=\psi_{K/C}\ast\big(e^{-2\pi i\phi_K^\lambda(\cdot,\bar\xi)}T^\lambda_K f(\cdot)\big)(x)+{\rm Rap Dec}(\lambda)\|f\|_{L^2(B_1^{n-1}(0))}.\]
Then \eqref{lemmaoooo} holds true obviously.
\end{proof}

	\begin{lemma}\label{lem:broad-2}
		For any two $K^{-1} $-balls $ \tau_1, \tau_2$ satisfying  the strongly separated condition \eqref{strong separated}, there holds
		\begin{align}\label{bilinear type}
			\sum_{Q\in\mathscr B}\|T_K^\lambda f_{\tau_1}\|_{L^p(Q)}^\frac p2\|T_K^\lambda f_{\tau_2}\|_{L^p(Q)}^\frac p2\leq K^{O(1)}\|f\|_{L^2(B_1^{n-1}(0))}^p+{\rm RapDec}(\lambda)\|f\|_{L^p(B_1^{n-1}(0))}.
		\end{align}
	\end{lemma}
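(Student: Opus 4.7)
My plan is to combine the local constancy property from Lemma \ref{lemmaoo} with the bilinear restriction theorem (Theorem \ref{theo3}) via a reverse-H\"older comparison and a parabolic rescaling. The overall strategy has three stages.

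\emph{Stage 1 (Fourier localization and local constancy).} Since each $f_{\tau_i}$ is supported in a $K^{-1}$-ball, Lemma \ref{lemmaoo} yields $T_K^\lambda f_{\tau_i}=g_i\ast\psi_{K/C}+\mathrm{RapDec}(\lambda)\|f\|_{L^2}$, where $g_i=e^{-2\pi i\phi_K^\lambda(\cdot,\xi_{\tau_i})}T_K^\lambda f_{\tau_i}$. Equivalently, by Lemma \ref{lem6666}, the Fourier transform of $T_K^\lambda f_{\tau_i}$ is concentrated in a plate $P_i\subset\mathcal N_{CK^{-2}}(S_{\tau_i})$ of dimensions $K^{-1}\times\cdots\times K^{-1}\times K^{-2}$. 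The strongly separated condition \eqref{strong separated} ensures that the long (normal) directions of $P_1$ and $P_2$ are transverse. Consequently $|T_K^\lambda f_{\tau_i}|$ is effectively constant on dual tilted boxes of dimensions $K\times\cdots\times K\times K^2$.

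\emph{Stage 2 (Reverse-H\"older comparison on each $K^2$-cube $Q$).} I would prove the pointwise-to-local-norm inequality
\[
\|T_K^\lambda f_{\tau_1}\|_{L^p(Q)}^{p/2}\|T_K^\lambda f_{\tau_2}\|_{L^p(Q)}^{p/2}\lesssim K^{O(1)}\int_{\omega_Q}|T_K^\lambda f_{\tau_1}\,T_K^\lambda f_{\tau_2}|^{p/2}\,dx,
\]
where $\omega_Q$ is a Schwartz weight adapted to $Q$. This is a Loomis--Whitney-type step: within $Q$, decompose along the two (transverse) tangent directions of $P_1$ and $P_2$; on each elementary piece $|T_K^\lambda f_{\tau_i}|$ is essentially constant, so H\"older becomes an equality up to $K^{O(1)}$, and transversality ensures that the recombination picks up only the claimed loss. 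Summing over $Q\in\mathscr B$ and using the finite overlap of $\omega_Q$ gives
\[
\sum_{Q\in\mathscr B}\|T_K^\lambda f_{\tau_1}\|_{L^p(Q)}^{p/2}\|T_K^\lambda f_{\tau_2}\|_{L^p(Q)}^{p/2}\lesssim K^{O(1)}\int_{\mathbb R^n}|T_K^\lambda f_{\tau_1}\,T_K^\lambda f_{\tau_2}|^{p/2}dx.
\]

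\emph{Stage 3 (Bilinear restriction plus rescaling).} To bound the integral on the right, I would apply Theorem \ref{theo3} after a parabolic rescaling $\xi\mapsto\xi_{\tau_i}+K^{-1}\xi$, $x'\mapsto Kx'$, $x_n\mapsto K^2x_n$, as in the proof of Lemma \ref{la11}. The rescaling converts $\tau_i$ to unit balls and boosts the separation constant from $\gtrsim K^{-1}$ in \eqref{strong separated} to an $O(1)$ constant in the hypothesis \eqref{eq:4} of Theorem \ref{theo3}, while sending $(\lambda,R)\mapsto(\lambda/K^2,R/K^2)$. Since $\phi_K$ is asymptotically flat, the rescaled phase satisfies the Carleson--Sj\"olin conditions with uniform constants, and Theorem \ref{theo3} gives $\||T^{\tilde\lambda}\tilde f_1\,T^{\tilde\lambda}\tilde f_2|^{1/2}\|_{L^p}\lesssim\|\tilde f_1\|_{L^2}^{1/2}\|\tilde f_2\|_{L^2}^{1/2}$ with only a $K^{O(1)}$ (i.e.\ $(R/K^2)^\varepsilon\cdot K^{O(\varepsilon)}$, absorbable into $K^{O(1)}$ for $\delta\ll\varepsilon$) loss. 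Undoing the rescaling and using $\|f_{\tau_i}\|_{L^2}\leq\|f\|_{L^2}$ yields $\int|T_K^\lambda f_{\tau_1}T_K^\lambda f_{\tau_2}|^{p/2}\lesssim K^{O(1)}\|f\|_{L^2}^p$, and combined with Stage 2 this gives \eqref{bilinear type} modulo the RapDec tail from Lemma \ref{lemmaoo}.

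\emph{Main obstacle.} The principal difficulty is Stage 2: the inequality it requires is a reverse of H\"older on each $Q$ and is false in general. The proof must exploit simultaneously the local constancy of both factors and, crucially, the transversality of their dual plates coming from the strongly separated condition. Without the transversality the $K^{O(1)}$ loss would be too large; managing the interplay between the $K^{-1}$ separation, the $K^2$ cube size, and the $K$-scale local constancy is the delicate part of the argument.
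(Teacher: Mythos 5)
Your overall architecture matches the paper's: use Lemma \ref{lemmaoo} to express $|T_K^\lambda f_{\tau_i}|$ as a convolution with $\psi_{K/C}$ plus a RapDec tail, reduce the sum over $Q\in\mathscr B$ to a single global integral $\int|T_K^\lambda f_{\tau_1}T_K^\lambda f_{\tau_2}|^{p/2}$, and close with Theorem \ref{theo3}. Stages 1 and 3 are essentially what the paper does, and your Stage 3 is even a bit more careful: you make the parabolic rescaling explicit so that the separation constant $\gtrsim K^{-1}$ in \eqref{strong separated} is normalized to the $O(1)$ constant $c$ required by \eqref{eq:4}, whereas the paper applies Theorem \ref{theo3} directly and leaves the $c$-dependence implicit. (One small slip in your Stage 3: $(R/K^2)^\varepsilon$ is not in general $\lesssim K^{O(1)}$ when $K=K_0R^\delta$ with $\delta=\varepsilon^2\ll\varepsilon$; the correct fix is to invoke Theorem \ref{theo3} with an $\varepsilon$-parameter chosen much smaller than $\delta$, so that the $R^{\varepsilon'}$ loss is dominated by $K^{O(1)}$.)

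The genuine difference, and the place where your route is more complicated than necessary, is Stage 2. You propose a per-$Q$ reverse-H\"older
$\|T_K^\lambda f_{\tau_1}\|_{L^p(Q)}^{p/2}\|T_K^\lambda f_{\tau_2}\|_{L^p(Q)}^{p/2}\lesssim K^{O(1)}\int_{\omega_Q}|T_K^\lambda f_{\tau_1}T_K^\lambda f_{\tau_2}|^{p/2}$
and correctly note that as a sharp pointwise inequality it requires both local constancy and transversality of the dual plates (otherwise $g_1$ and $g_2$ could be essentially disjointly supported inside $Q$, making the right-hand side vanish). This is a real Loomis--Whitney-type step and is harder to execute cleanly than you might hope. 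The paper sidesteps it: since the budget is $K^{O(1)}$ and $|Q|\sim K^{2n}$, one can simply use $\|g\|_{L^p(Q)}\leq|Q|^{1/p}\|g\|_{L^\infty(Q)}$, then the locally-constant property at scale $K$ to compare $\|\cdot\|_{L^\infty(Q)}$ with an average over a unit cube $I_Q\subset Q$ (convolving against $\zeta_K(x)=\sup_{|y-x|\leq K^2}|\psi_{K/C}(x)|$), apply H\"older to pull the $p/2$ power inside the convolution, and sum over $Q$ to produce $\sup_{y,z}\int_{B_R^n}|T_K^\lambda f_{\tau_1}(x-y)|^{p/2}|T_K^\lambda f_{\tau_2}(x-z)|^{p/2}dx$. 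None of this uses transversality; transversality enters only via Theorem \ref{theo3} at the final step. So your identification of Stage 2 as the main obstacle is a self-imposed one: the crude $L^\infty$ reduction yields the same intermediate estimate with far less work, precisely because the lemma already tolerates a $K^{O(1)}$ loss.
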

	\begin{proof}Without loss of generality we may assume $\|f\|_{L^2(B^{n-1}_1(0))}=1$. By Lemma \ref{lemmaoo}, we have
\[|T^\lambda_Kf_\tau(x)|=\big|\big(e^{-2\pi i\phi_K^\lambda(\cdot,\bar\xi)}T^\lambda_Kf_\tau(\cdot)\big)\ast\psi_{K/C}(x)\big|+{\rm RapDec}(\lambda),\]
for each $\tau$.
	To prove $\eqref{bilinear type}$, we just need to show
		\begin{align}
&\sum_{Q\in\mathscr B}\big\|(e^{-2\pi i\phi^\lambda(\cdot,\xi_{\tau_1})}T_K^\lambda  f_{\tau_1})\ast\psi_{K/C}\big\|_{L^\infty(Q)}^\frac p2
\big\|(e^{-2\pi i\phi^\lambda(\cdot,\xi_{\tau_2})}T_K^\lambda  f_{\tau_2})\ast\psi_{K/C}\big\|_{L^\infty(Q)}^\frac p2 \nonumber \\
\leq& K^{O(1)}\|f\|_{L^2(B_1^{n-1}(0))}^p.
\end{align}
		Define $\displaystyle\zeta_K(x):=\sup_{|y-x|\leq K^2}|\psi_{K/C}(x)|$. By the locally constant property one can choose some cube $I_Q\subset Q$ with $|I_Q|\lesssim 1$ such that
		\begin{align*}
			&\big\|(e^{-2\pi i\phi^\lambda(\cdot,\xi_{\tau_1})}
			T_K^\lambda  f_{\tau_1})\ast\psi_{K/C}\big\|_{L^\infty(Q)}\big\|(e^{-2\pi i\phi^\lambda(\cdot,\xi_{\tau_2})}T_K^\lambda  f_{\tau_2})\ast\psi_{K/C}\big\|_{L^\infty(Q)} \\
			\leq& \int_{I_Q}\int_{\mathbb R^n}\int_{\mathbb R^n}|T_K^\lambda f_{\tau_1}(x-y)\zeta_K(y)T_K^\lambda f_{\tau_2}(x-z)\zeta_K(z)|dydzdx.
		\end{align*}
		Then we only need to show
		\[\sum_{Q\in\mathscr B}\Big(\int_{I_Q}\int_{\mathbb R^n}\int_{\mathbb R^n}|T_K^\lambda f_{\tau_1}(x-y)\zeta_K(y)T_K^\lambda f_{\tau_2}(x-z)\zeta_K(z)|dydzdx\Big)^\frac p2\leq K^{O(1)}\|f\|_{L^2(B_1^{n-1}(0))}^p.\]
		Using H\"older's inequality, and for $p\geq \frac{2(n+2)}n$, we have
		\begin{align*}
			&\sum_{Q\in\mathscr B}
			\Big(\int_{I_Q}\int_{\mathbb R^n}\int_{\mathbb R^n}|T_K^\lambda f_{\tau_1}(x-y)\zeta_K(y)T_K^\lambda f_{\tau_2}(x-z)\zeta_K(z)|dydzdx\Big)^\frac p2\\
			\leq &K^{O(1)}\sum_{Q\in\mathscr B}\int_{I_Q}\int_{\mathbb R^n}\int_{\mathbb R^n}|T_K^\lambda f_{\tau_1}(x-y)T_K^\lambda f_{\tau_2}(x-z)|^\frac p2\zeta_K(y)\zeta_K(z) dydzdx\\
			\leq& K^{O(1)}\sup_{y,z}\Big(\int_{B_R^n(0)}|T_K^\lambda f_{\tau_1}(x-y)|^\frac p2|T_K^\lambda f_{\tau_2}(x-z)|^\frac p2dx\Big)\\
			\leq &K^{O(1)}\|f\|_{L^2(B_1^{n-1}(0))}^p.
		\end{align*}
Where we have used Theorem \ref{theo3} in the last inequality. Hence, we complete the proof of Lemma \ref{lem:broad-2}.
	\end{proof}
	Finally, we use the three lemmas above to give a proof of Proposition \ref{broad result}.
	
	{\bf \noindent The proof of broad estimate}: By Lemmas \ref{lem:broad-1}, \ref{lemmaoo} and \ref{lem:broad-2}, we have
	\begin{align*}
		\sum_{Q\in\mathscr B}\|T_K^\lambda f\|_{L^p(Q)}^p
		&\leq CK^{O(1)}\sum_{Q\in\mathscr B}\sum_{\substack{ \tau_1, \tau_2\in \mathcal S_p(Q)\\  \tau_1, \tau_2\ \mathrm{ satisfy } {\eqref{strong separated}}}}\|T_K^\lambda f_{\tau_1}\|_{L^p(Q)}^\frac p2\|T_K^\lambda f_{\tau_2}\|_{L^p(Q)}^\frac p2 \\
		&= CK^{O(1)}\sum_{ \tau_1, \tau_2\ \mathrm{  satisfy } {\eqref{strong separated}}}\sum_{Q\in\mathscr B: \tau_1, \tau_2\in\mathcal S_p(Q)}\|T_K^\lambda f_{\tau_1}\|_{L^p(Q)}^\frac p2\|T_K^\lambda f_{\tau_2}\|_{L^p(Q)}^\frac p2 \\
		&\leq CK^{O(1)}\|f\|_{L^2(B_1^{n-1}(0))}^p,
	\end{align*}
	where we have used the fact that $\mathfrak T\leq K^{O(1)}$ in the last inequality. Then we finished the proof of Proposition \ref{broad result}.
	
	For all $m\leq \lfloor(n-1)/2\rfloor$, we prove Theorem \ref{theo1} by using the narrow estimate \eqref{narrow result} and the broad estimate \eqref{broad result0}.

	Recall \eqref{eq:124} holds true for $1\leq\lambda'\leq \lambda/2$, thus we have $Q_p(\tilde \lambda,\tilde R)\leq C_\varepsilon \tilde R^\varepsilon$, where $\tilde \lambda=K^{-2}\lambda<\lambda/2$ and $\tilde R=K^{-2}R\leq \tilde \lambda$.
 Thanks to the relation of $K,K_1$ and $R$ in \eqref{relation}, and we conclude that
	\[Q_p(\lambda,R)\leq C_\varepsilon R^\varepsilon K_1^{-\varepsilon}+CK^{O(1)}\leq C_\varepsilon R^\varepsilon\]
	holds true for
	\begin{align}\label{p-relation}p\geq\max_{0\leq m\leq \lfloor \frac{n-1}2\rfloor}\Big\{\frac{2(n+2)}n,\frac{2(n-m)}{n-m-1}\Big\}.\end{align}
This inequality  is equivalent to
	\begin{equation}\label{eq:maina1a}p\geq \left\{\begin{aligned}
			&\tfrac{2(n+1)}{n-1}\quad \text{\rm for $n$ odd},\\
			&\tfrac{2(n+2)}{n}\quad \text{\rm for $n$ even}.
		\end{aligned}\right.\end{equation}
	Then we finished the proof of Theorem \ref{theo1}.

	\section{proof of the decoupling theorem}\label{dec}
	\subsection{Reduction}
	First, we recall the decoupling theorem of Bourgain-Demeter\cite{BD1}. Let $S$ be a compact hypersurface with nonvanishing Gaussian curvature, and $\mathcal{N}_{\delta}(S)$ be  the $\delta$-neighborhood of $S$. Decompose $\mathcal{N}_\delta(S)$ into a collection of finitely-overlapping slabs $\{\vartriangle \}$ of dimension $\delta^{1/2}$ in the tangent direction and $\delta$ in the normal direction. We have the following  decomposition
	\beqq
	f=\sum_\vartriangle  f_\vartriangle,
	\eeqq
	where ${\rm supp}\widehat{f_\vartriangle}\subset \vartriangle $. A classical decoupling result associated to this decomposition is as follows:
	\begin{theorem}[\cite{BD1}]\label{theo13}
		Let $S$ be a compact smooth hypersurface in $\R^n$ with nonvanishing Gaussian curvature. If ${\rm supp}\hat{f}\subset \mathcal{N}_\delta(S)$, then for $p\geq \frac{2(n+1)}{n-1}$ and $\varepsilon>0$,
		\beq
		\|f\|_{L^p(\R^n)}\leq_\varepsilon \delta^{\frac{n}{p}-\frac{n-1}{2}-\varepsilon}\Big(\sum_{\vartriangle}\|f_\vartriangle\|_{L^p(\R^n)}^p\Big)^{\frac{1}{p}}.
		\eeq
	\end{theorem}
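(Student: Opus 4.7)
The plan is to prove Theorem \ref{theo13} by the Bourgain-Demeter induction-on-scales argument at the endpoint $p_c:=\tfrac{2(n+1)}{n-1}$ (larger $p$ follow by interpolation with the trivial $L^\infty$ bound). Introduce the decoupling constant $D_p(\delta)$ as the best constant in the stated inequality, ranging over all smooth compact hypersurfaces with bounded principal curvatures; the aim is to show $D_p(\delta)\leq C_\varepsilon \delta^{-\varepsilon}$ for every $\varepsilon>0$. A partition of unity and a local diffeomorphism to a paraboloid graph reduce the problem to the canonical decoupling for an extension operator $E$ on a unit frequency ball, localized at the physical scale $R=\delta^{-1}$, with slabs $\vartriangle$ replaced by $R^{-1/2}$-caps $\tau$ on $B_1^{n-1}(0)$.

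Next, I would run a broad-narrow analysis at an intermediate scale $K=R^\sigma$ with $\sigma\ll\varepsilon$. Cover $B_1^{n-1}(0)$ by $K^{-1}$-caps $\{\pi\}$ and $B_R^n$ by $K^2$-cubes $\{Q\}$. A cube $Q$ is declared \emph{narrow} if the caps $\pi$ contributing the bulk of $\|Ef\|_{L^{p_c}(Q)}$ lie inside the $O(K^{-1})$-neighborhood of some affine hyperplane in $\R^{n-1}$, and \emph{broad} otherwise. On narrow $Q$, a flat decoupling into the $K^{-1}$-caps costs a power of $K$ dictated by $(n{-}2)$-dimensional geometry, after which parabolic rescaling on each $\pi$ converts the local decoupling problem at scale $\delta$ into one at scale $\delta K^2$ on a unit-scale paraboloid; the induction hypothesis $D_{p_c}(\delta K^2)\leq C_\varepsilon(\delta K^2)^{-\varepsilon}$ then produces a strict gain in the recursion provided $\sigma$ is small enough.

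On broad $Q$, the significant caps $\pi$ admit a uniformly transverse $n$-tuple, so the multilinear restriction theorem of Bennett-Carbery-Tao yields an $\varepsilon$-free bound for the $n$-linear product of extension operators; a trivial $L^2$-to-$L^{p_c}$ conversion on each factor then recovers a broad estimate losing only $K^{O(1)}$. Combining the two regimes gives a recursive inequality of the form $D_{p_c}(\delta)\leq C K^{O(1)} + C_\varepsilon K^{O(1)}(\delta K^2)^{-\varepsilon}$, which closes after finitely many iterations once $\sigma$ is chosen small. The main technical obstacle is the $\ell^p$-versus-$\ell^2$ bookkeeping: the induction closes most transparently for the $\ell^2(L^{p_c})$ formulation, while the theorem as stated is in the $\ell^p(L^p)$ form; the Cauchy-Schwarz conversion between them introduces the factor $N(\delta)^{1/2-1/p}=\delta^{n/p-(n-1)/2}$ that appears in the exponent and must be propagated exactly through every parabolic rescaling, both in the narrow induction step and in the final collection of the broad and narrow bounds.
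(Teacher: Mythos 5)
The paper states Theorem~\ref{theo13} purely as a citation from Bourgain--Demeter~\cite{BD1} and offers no proof; it is an external black box used to deduce Theorem~\ref{theo31}. So there is no in-paper proof to compare against, and your sketch must be assessed on its own terms.

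Your outline correctly identifies the skeleton — induction on scales, broad--narrow dichotomy at an intermediate scale $K$, parabolic rescaling to re-enter the induction in the narrow regime — but the treatment of the broad regime has a genuine gap, and it sits exactly at the heart of the theorem. You claim that on broad cubes the Bennett--Carbery--Tao multilinear restriction inequality plus ``a trivial $L^2$-to-$L^{p_c}$ conversion on each factor'' produces a broad bound losing only $K^{O(1)}$. This is not correct. Multilinear restriction bounds $n$-linear products of $Ef_{\pi_i}$ by $\prod_i\|f_{\pi_i}\|_{L^2}^{1/n}$, and turning $\|f_{\pi}\|_{L^2}$ back into $(\sum_{\tau}\|Ef_\tau\|^{p_c}_{L^{p_c}(w_{B_R})})^{1/p_c}$ via local $L^2$ orthogonality, H\"older on the ball $B_R$, and H\"older on the index set necessarily produces a loss of order $\delta^{-(n-1)/(n+1)}$ at the endpoint $p_c=\tfrac{2(n+1)}{n-1}$, whereas the theorem demands only $\delta^{-(n-1)/(2(n+1))-\varepsilon}$. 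That surplus is a fixed positive power of $\delta$, not a power of $K$, so your recursion $D_{p_c}(\delta)\lesssim K^{O(1)}+K^{O(1)}D_{p_c}(\delta K^2)$ cannot close at the stated exponent. The actual Bourgain--Demeter argument does not derive the broad (transverse) estimate from multilinear \emph{restriction}: it introduces a multilinear \emph{decoupling} constant and controls it by an iteration alternating a ball-inflation step (a consequence of multilinear \emph{Kakeya}), $L^2$ orthogonality on small balls, and parabolic rescaling; that machinery, or a genuine substitute such as Guth--Maldague--Oh's high-low method, is what is missing. Two secondary points: your identity $N(\delta)^{1/2-1/p}=\delta^{n/p-(n-1)/2}$ holds only at $p=p_c$, and reducing a general hypersurface with nonvanishing Gaussian curvature to a paraboloid via a ``local diffeomorphism'' glosses over the hyperbolic-signature cases, which \cite{BD1} treats separately precisely because the narrow/lower-dimensional contributions behave differently there.
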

They also have a local version of decoupling 
\begin{align}\label{loc-dec}
\|f\|_{L^p(B_R^n)}\leq_\varepsilon \delta^{\frac{n}{p}-\frac{n-1}{2}-\varepsilon}\Big(\sum_{\vartriangle }\|f_\vartriangle \|_{L^p(\omega_{B_R^n})}^p\Big)^{\frac{1}{p}}.
\end{align}

	For $1\ll R\leq \lambda$, let $D_p(\lambda,R)$ be the optimal constant such that
	\begin{align}
	\|T^\lambda_K f\|_{L^p(B_R^n)}\leq D_p(\lambda,R)\Big(\sum_{\theta}\|T^\lambda_K f_\theta\|_{L^p(w_{B_R^n})}^p\Big)^{1/p}+{\rm RapDec}(\lambda)\|f\|_{L^p}
	\end{align}
holds true for all asymptotically flat phase $\phi_K$ and for all $\mathfrak a$ satisfying $\eqref{eq:c13}$, and uniformly for all $f\in L^p(B_1^{n-1}(0))$.
	To prove Theorem \ref{theo31},  it suffices to show
	\beq\label{eq:induc}
	D_p(\lambda,R)\leq C_{\varepsilon}R^{\frac{n-1}2-\frac np+\varepsilon}.
	\eeq
	Indeed, by H\"older's inequality, we have 
	\begin{align}
	\|T^\lambda f\|_{L^p(B_R^n)}\leq K^{(n-1)/p'}\big(\sum_{\tau}\|T^\lambda f_\tau\|_{L^p(B_R^n)}^p\big)^{1/p}.
	\end{align}
	For each $\tau$, performing the similar procedure as in the proof of  Lemma \ref{la11}, we have
	\beqq
	\|T^\lambda f_\tau\|_{L^p(B_R^n)}^p \leq CK^{O(1)}\sum_{B_{\tilde R}^n \subset \Box_R}\|T_{\tilde K}^{\tilde \lambda} \tilde{f}\|_{L^p(B_{\tilde R}^n)}^p,
	\eeqq
	where $\tilde f(\cdot)=K^{-(n-1)}f(K^{-1}\cdot+\xi_\tau)$, $\tilde R=R/K^2, \tilde K=K_0\tilde R^{\varepsilon^2}, \tilde \lambda=\lambda/K^2$, and $\Box_R$ is a rectangle of dimensions $R/K\times \cdots \times R/K\times R/K^2$.
	Then by \eqref{eq:induc}, we have
	\begin{align}
	\|T^{\tilde \lambda}_{\tilde K}\tilde f\|_{L^p(B_{\tilde R}^n)}\leq C_{\varepsilon}(\tilde R)^{\frac{n-1}2-\frac np+\varepsilon}\Big(\sum_{\tilde \theta }\|T_{\tilde K}^{\tilde \lambda}\tilde f_{\tilde \theta}\|_{L^p(w_{B_{\tilde R}^n)}}^p\Big)^{1/p}+{\rm RapDec}(\tilde \lambda)\|f\|_{L^p(B_1^{n-1}(0))},
	\end{align}
	where $\tilde \theta$ is a ball of dimension $\tilde{R}^{-1/2}$. By reversing the change of variables, we finally  obtain
	\beqq
	\|T^\lambda f\|_{L^p(B_R^n)}\leq C_\varepsilon R^{\frac{n-1}2-\frac np+\varepsilon} \Big(\sum_{\theta}\|T^\lambda f_\theta\|_{L^p(w_{B_R^n})}^p\Big)^{1/p}+{\rm RapDec}(\lambda)\|f\|_{L^p(B_1^{n-1}(0))}.
	\eeqq
	\subsection{The proof of \eqref{eq:induc}.} Let $ \tau\subset B_1^{n-1}(0)$ be a ball of radius $K^{-1}$. For convenience, define
	$$\mathbb{H}:=\{(\xi, \langle M\xi,\xi\rangle):\xi \in B_1^n(0)\},$$
	and denote  by $\mathbb{H}_{\tau}$ a cap on $\mathbb{H}$,
	\beqq
	\mathbb{H}_{ \tau}:=\{(\xi, \langle M\xi,\xi\rangle):\xi \in \tau\}.
	\eeqq
	If $\omega $ does not belong to a $CK^{-1}$-neighborhood of $\mathbb{H}$, by Lemma \ref{lem6666} we have
	\beqq
	\widehat{T^{\lambda}_K f}(\omega)={\rm RapDec}(\lambda)\|f\|_{L^p(B_1^{n-1}(0))}.
	\eeqq
	Therefore
	\beq\label{decp555}
	T_{K}^{\lambda}f=\chi_{CK^{-1}}(\mathbb{H})(D)T_{K}^{ \lambda} f+{\rm RapDec}(\lambda)\|f\|_{L^p(B_1^{n-1}(0))}.
	\eeq
	Similarly,
	\beq \label{decp666}
	T_{K}^{\lambda} f_{ \tau}=\chi_{CK^{-1}}(\mathbb{H}_{ \tau})(D)T_{K}^{ \lambda} f_{\tau}+{\rm RapDec}(\lambda)\|f\|_{L^p(B_1^{n-1}(0))}.
	\eeq
	 Applying local decoupling inequality \eqref{loc-dec} to the formula $\eqref{decp555}$ and $\eqref{decp666}$, we have
	\beqq
	\big\|T^{ \lambda}_{ K} f\big\|_{L^p(B_{ R}^{n})}\leq C_{\bar \delta} K^{\frac{n-1}2-\frac np+\bar{\delta}} \Big(\sum_{ \tau }\|T_{K}^{\lambda} f_{ \tau}\|_{L^p(\omega_{B_{ R}^n})}^p\Big)^{1/p}+{\rm RapDec}(\lambda)\|f\|_{L^p(B_1^{n-1}(0))},
	\eeqq
	where $\bar{\delta}>0$ is a small constant to be chosen later.
	By a similar way as in  the proof of Lemma \ref{la11}, we have
	\beq\label{eq:1122}
	\|T_{ K}^{\lambda}f_{ \tau}\|_{L^p(\omega_{B_{ R}^n})}^p\leq C(K)\sum_{B_{\tilde  R}^n \subset \Box_{R}}\|\tilde T^{\tilde {\lambda}}_{\tilde  K}\tilde{f}\|_{L^p(B_{\tilde  R}^n)}^p+{\rm RapDec }(\lambda)\|f\|_{L^p(B_1^n(0))},
	\eeq
where $\tilde f(\xi)=K^{-(n-1)}f(\xi_\tau+K^{-1}\xi)$ and $\xi_\tau$ is the center of $\tau$.
	For each $B_{\tilde R}^n$,  by the definition of $D_p(\lambda,R)$, we have
	\beq
	\|\tilde T^{\tilde {\lambda}}_{\tilde  K}\tilde{f}\|_{L^p(B_{\tilde  R}^n)}\leq D_p(\tilde \lambda,\tilde R)\Big(\sum_{ \tilde \theta }\|\tilde T_{\tilde K}^{\tilde \lambda} \tilde f_{ \tilde \theta}\|_{L^p(w_{B_{\tilde R}^n)}}^p\Big)^{1/p}+{\rm RapDec}(\lambda)\|f\|_{L^p(B_1^{n-1}(0))},
	\eeq
	where $\{\tilde \theta\}$ is a collection of finitely-overlapping balls of radius $\tilde R^{-1/2}$.
	By reversing the change of variables,  finally we have
	\beq
	\big\|T^{ \lambda}_{ K} f\big\|_{L^p(B_{ R}^{n})}\leq C_{\bar \delta}K^{\frac{n-1}{2}-\frac{n}{p}+\bar{\delta}}  D_p(\tilde \lambda,\tilde R)\Big(\sum_{ \theta }\|T_{K}^{\lambda} f_{ \theta}\|_{L^p(w_{B_{ R}^n)}}^p\Big)^{1/p}+{\rm RapDec}(\lambda)\|f\|_{L^p(B_1^{n-1}(0))}.
	\eeq
	Recalling the definition of $D_p(\lambda,R)$, we have
	\beq\label{formula99}
	D_p(\lambda,R)\leq C_{\bar \delta} K^{\frac{n-1}{2}-\frac{n}{p}+\bar \delta} D_p(\tilde \lambda,\tilde R).
	\eeq
	This inequality \eqref{formula99} yields,  by the induction hypothesis that
	\beq\label{formula100}
	D_p(\lambda,R)\leq C_\varepsilon R^{\frac{n-1}2-\frac np+\varepsilon} C_{\bar \delta} K^{\bar \delta-2\varepsilon}.
	\eeq
	Choosing  $\bar{\delta}=\varepsilon^2$ and $K_0$ sufficiently large such that
	\beqq
	K_0^{\varepsilon^2-2\varepsilon}C_{\bar \delta}\leq 1,
	\eeqq
	and from \eqref{formula100}, we can complete the induction procedure, i.e.
	\beqq
	D_p(\lambda,R)\leq C_{\varepsilon}R^{\frac{n-1}2-\frac np+\varepsilon}.
	\eeqq

	\subsection*{Acknowledgements}\;   The project was supported by the National Key R\&D Program of China: No. 2022YFA1005700,   C.Gao was supported by NSFC No.12301121, and C. Miao was supported by NSFC No.12371095.

	\bibliographystyle{amsplain}

\end{document}